\newtheorem{theorem}{Theorem}[section]
\newtheorem{proposition}[theorem]{Proposition}
\theoremstyle{definition}
\theoremstyle{remark}
\newcommand{\Z}{\mathbb{Z}}
\newcommand{\R}{\mathbb{R}}
\newcommand{\C}{\mathbb{C}}
\newcommand{\rmt}{\mathrm{t}}
\newcommand{\rmL}{\mathrm{L}}
\newcommand{\rmV}{\mathrm{V}}
\newcommand{\calC}{\mathcal{C}}
\newcommand{\calD}{\mathcal{D}}
\newcommand{\calE}{\mathcal{E}}
\newcommand{\calG}{\mathcal{G}}
\newcommand{\calH}{\mathcal{H}}
\newcommand{\calK}{\mathcal{K}}
\newcommand{\calL}{\mathcal{L}}
\newcommand{\calN}{\mathcal{N}}
\newcommand{\calR}{\mathcal{R}}
\newcommand{\calV}{\mathcal{V}}
\newcommand{\frakg}{\mathfrak{g}}
\renewcommand{\epsilon}{\varepsilon}
\renewcommand{\theta}{\vartheta}
\renewcommand{\phi}{\varphi}
\renewcommand{\Gamma}{\varGamma}
\renewcommand{\Sigma}{\varSigma}
\newcommand{\ad}{\mathrm{ad}}
\newcommand{\coad}{\mathrm{coad}}
\newcommand{\id}{\mathrm{id}}
\newcommand{\tr}{\mathrm{tr}}
\newcommand{\rad}{\operatorname{rad}}
\newcommand{\lev}{\smash{\stackrel{\leftarrow}{\mathrm{ev}}}}
\newcommand{\lcoev}{\smash{\stackrel{\longleftarrow}{\mathrm{coev}}}}
\newcommand{\rev}{\smash{\stackrel{\rightarrow}{\mathrm{ev}}}}
\newcommand{\rcoev}{\smash{\stackrel{\longrightarrow}{\mathrm{coev}}}}
\newcommand{\leqs}{\leqslant}
\newcommand{\geqs}{\geqslant}
\newcommand{\mods}[1]{\operatorname{\mathnormal{#1}-mod}}
\newcommand{\fsl}{\mathfrak{sl}}
\newcommand{\SL}{\mathrm{SL}}
\newcommand{\SO}{\mathrm{SO}}
\newcommand{\PGL}{\mathrm{PGL}}
\newcommand{\MCG}{\mathrm{Mod}}
\newcommand{\HH}{\mathrm{HH}}
\newcommand{\End}{\mathrm{End}}
\newcommand{\op}{\mathrm{op}}
\newcommand{\RT}{\mathrm{RT}}
\DeclareRobustCommand{\one}{\mathbin{\text{\includegraphics[height=\heightof{$\mathbf{1}$}]{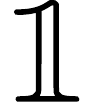}}}}
\newcommand{\Proj}{\mathrm{Proj}}
\newcommand{\subalign}[1]{
  \vcenter{
    \Let@ \restore@math@cr \default@tag
    \baselineskip\fontdimen10 \scriptfont\tw@
    \advance\baselineskip\fontdimen12 \scriptfont\tw@
    \lineskip\thr@@\fontdimen8 \scriptfont\thr@@
    \lineskiplimit\lineskip
    \ialign{\hfil$\m@th\scriptstyle##$&$\m@th\scriptstyle{}##$\crcr
      #1\crcr
    }
  }
}
\def\clap#1{\hbox to 0pt{\hss#1\hss}}
\newcommand{\pic}[2][0]{\raisebox{-0.5\height + 2.5pt + #1pt}{\includegraphics{#2.pdf}}}
\newcommand\arxiv[2]{\href{https://arXiv.org/abs/#1}{\texttt{arXiv:\allowbreak #1} #2}}
\newcommand\doi[2]{\href{https://doi.org/#1}{#2}}
\DeclareRobustCommand{\myuline}[1]{
 \ifmmode \text{\uline{$\phantom{#1}$}\llap{\contour{white}{$#1$}}}
 \else \uline{\phantom{#1}}\llap{\contour{white}{#1}} \fi
}
\begin{document}

\raggedbottom

\title{Modular Categories and TQFTs Beyond Semisimplicity}

\author[C. Blanchet]{Christian Blanchet} 
\address{Université de Paris, Sorbonne Université, CNRS, IMJ-PRG, F-75006 Paris, France}
\email{christian.blanchet@imj-prg.fr}

\author[M. De Renzi]{Marco De Renzi} 
\address{Institute of Mathematics, University of Zurich, Winterthurerstrasse 190, CH-8057 Zurich, Switzerland} 
\email{marco.derenzi@math.uzh.ch}

\begin{abstract}
 Vladimir Turaev discovered in the early years of quantum topology that the notion of modular category was an appropriate structure for building 3-di\-men\-sion\-al Topological Quantum Field Theories (TQFTs for short) containing invariants of links in 3-manifolds such as Witten--Reshetikhin--Turaev ones. In recent years, generalized notions of modular categories, which relax the semisimplicity requirement, have been successfully used to extend Turaev's construction to various non-semisimple settings. We report on these recent developments in the domain, showing the richness of Vladimir's lineage.
\end{abstract}

\maketitle
\setcounter{tocdepth}{3}

\section{Introduction}

After the work of Vaughan Jones \cite{J85} and Edward Witten \cite{W88,W89}, Vladimir Turaev played a key role in the foundation of quantum topology. Together with Nikolai Reshetikhin \cite{RT91}, they were the first to obtain a mathematically rigorous construction of Witten's invariants of 3-dimensional manifolds. Turaev showed that the relevant algebraic structure underlying their approach was that of a \textit{modular category}, a notion he himself identified and introduced for this purpose \cite{T94}. Starting from a modular category, one can build a Topological Quantum Field Theory (TQFT for short) and, in particular, invariants of links in 3-manifolds, and representations of mapping class groups of surfaces. The famous Witten--Reshetikhin--Turaev invariants can be derived from a family of modular categories arising as semisimple quotients of representation categories of the restricted quantum group of $\fsl_2$ at roots of unity. Around the same time, equivalent modular categories were obtained from the Kauffman bracket skein theory \cite{BHMV95}. On the one hand, the quantum group construction generalizes to other families of modular categories, corresponding to arbitrary simple Lie algebras $\frakg$. On the other hand, the skein construction produces families of modular categories associated with the structure of Hecke algebras of type A \cite{B98} and of Birman--Murakami--Wenzl algebras of type B,C, and D \cite{BB00}. The abundance of examples shows the relevance of Turaev's definition. However, in all these instances, only simple representations, which cover just a small part of the corresponding representation theory, are used.

Already within the first decade since the birth of quantum topology, constructions of invariants and mapping class group representations beyond the scope of semisimple modular categories emerged, thanks to the work of Mark Hennings \cite{H96} and Volodymyr Lyubashenko \cite{L94}. More recently, the algebraic theory of \textit{modified traces}, whose germs can be found in an early collaboration of Nathan Geer and Bertrand Patureau with Turaev \cite{GPT07}, served as a huge propeller for a new wave of non-semisimple quantum constructions. Through this technology, TQFTs and HQFTs\footnote{Acronym for Homotopy Quantum Field Theory, a notion introduced by Turaev.} were built for the first time out of non necessarily semisimple generalizations of modular categories. Our purpose here is to review the original construction, and to relate it to the latest achievements in the field.

\section{Historical overview}\label{S:history}

\subsection*{Definition of TQFT}

A popular axiomatization of the notion of TQFT was proposed by Micheal Atiyah \cite{A88}, who was inspired by a similar definition of Graeme Segal \cite{S88} for the concept of a Conformal Field Theory (CFT for short). Paraphrasing Atiyah, a TQFT in dimension 3 can be defined as a symmetric monoidal functor from a category of 3-dimensional cobordisms to the category of vector spaces. In this definition, cobordism categories are naturally equipped with the symmetric monoidal structure induced by disjoint union. In other words, a TQFT $V$ associates with every closed oriented surface $\Sigma$ a vector space $V(\Sigma)$ called the \textit{state space} of $\Sigma$, and it associates with every cobordism $M$ from $\Sigma$ to $\Sigma'$ a linear map $V(M) : V(\Sigma) \to V(\Sigma')$. At the same time, a TQFT translates the topological operations of gluing and disjoint union into the algebraic operations of composition and tensor product, respectively. The use of an indefinite article in this definition, when we talk about \textit{a} category of cobordisms, as opposed to \textit{the} category of cobordisms, is due to the fact that cobordisms are typically allowed to carry additional structure, which may vary according to the different constructions we might want to consider. All the resulting notions are gathered under the umbrella term ``TQFT''.

\subsection*{Semisimple constructions}

In his seminal work \cite{W88}, Witten defined topological invariants of closed oriented 3-manifolds in terms of Chern--Simons gauge theory, and he predicted that these invariants could be understood as part of a TQFT. Furthermore, he explained in \cite{W89} that his construction actually provides a quantum field theoretical interpretation for the mysterious Jones polynomial \cite{J85}, a topological invariant of knots whose geometric content remains to this day quite elusive. Witten's approach uses tools, such as Feynman's path integral, which have not been formalized on a mathematical level yet. Shortly after, Resthetikhin and Turaev developed an alternative method for the construction of a large family of topological invariants of closed oriented 3-manifolds containing Witten's invariants, and based on a semisimple version of the representation theory of certain finite-dimensional ribbon Hopf algebras called \textit{modular Hopf algebras} \cite{RT91}. Turaev later introduced his notion of modular category by distilling and axiomatizing the properties of simple modules over these Hopf algebras which play a key role in the construction. In this very general framework, Turaev managed to complete Witten's program and to rigorously define for the first time TQFT functors inducing projective representations of mapping class groups of closed oriented surfaces \cite{T94}. Although this mature version of the theory is due to Turaev alone, it is still common, for a modular category $\calC$, to refer to the resulting TQFT $\rmV_\calC$ as a Reshetikhin--Turaev TQFT, and to denote the underlying topological invariant as $\RT_\calC$. Turaev's approach crucially exploits the semisimplicity assumption which is hardcoded into the original definition of modular categories. However, none of the quantum groups featured in the main family of examples naturally meet this condition. This means their categories of finite-dimensional representations have to undergo a quotient process which sacrifices algebraic information.

The main instances of finite-dimensional ribbon Hopf algebras inducing TQFTs through Turaev's approach are provided by so-called \textit{small} and \textit{restricted} quantum groups. In Section \ref{S:sl_2_sem-sim}, we discuss the simplest possible case, that of $\bar{U}_q \fsl_2$. For this particular family of concrete examples, whose corresponding topological invariants are spelled out in detail in \cite{KM91}, alternative constructions exist. In \cite{L93}, Lickorish obtained the same 3-manifold invariants relying exclusively on combinatorial techniques based on Temperley--Lieb algebras and Kauffman bracket polynomials. These invariants were then extended to TQFTs by Blanchet, Habegger, Masbaum, and Vogel in \cite{BHMV95} through a general procedure called the \textit{universal construction}. This approach, which identifies state spaces of closed surfaces with quotients of skein modules of handlebodies, provides a very natural method for the definition of functorial extensions of closed 3-manifold invariants to categories of cobordisms, and it has since been successfully applied in a wide range of different settings.

\subsection*{Early non-semisimple constructions}

Reshetikhin and Turaev's work immediately sparked a wide interest in the newly born field of quantum topology, and some natural questions quickly emerged. First of all, is it possible to figure out the relevant combinatorics for elements of ribbon Hopf algebras arising from the construction of \cite{RT90} without relying on their representation theory? Answering this question required learning how to deal with Hopf algebras themselves, which, in the relevant family of concrete examples, are not semisimple, and thus are not completely determined by their simple modules. Therefore, another natural question was: can we relax the semisimplicity requirement of Turaev? Lawrence explained in \cite{La90} how to construct so-called \textit{universal link invariants} using directly quantum groups, and thus bypassing their representation theory. These ideas were expanded by Hennings \cite{H96}, who upgraded Lawrence's construction to obtain invariants of closed oriented 3-manifolds starting from finite-dimensional factorizable\footnote{Factorizability is the appropriate non-degeneracy condition for ribbon Hopf algebras in the non-semisimple setting, see Section \ref{S:L-mod_cat}.} ribbon Hopf algebras. Since semisimplicity was no longer required, this constituted the first non-semisimple generalization of Reshetikhin--Turaev invariants, and the start of non-semisimple quantum topology. Hennings' result was reformulated by Kauffman and Radford avoiding the use of orientations for framed links \cite{KR94}. Both of these constructions were extended by Lyubashenko, who proposed a more general definition of modular category with respect to Turaev's original one, and used it to build topological invariants of 3-manifolds and projective representations of mapping class groups of surfaces \cite{L94}. For such a modular category $\calC$, which is allowed to be non-semsimple, we denote with $\rmL_\calC$ the corresponding Lyubashenko invariant of closed oriented 3-manifolds. The main families of examples are provided by categories of finite-dimensional representations of small quantum groups, without quotient operations. In Section \ref{S:sl_2_non-sem-sim}, we discuss the case of $\bar{U}_q \fsl_2$. 

Lyubashenko's work raised again some natural questions:
\begin{enumerate}
 \item Since this construction produces all the desired byproducts one expects to obtain from a TQFT, is there actually a TQFT lying behind them? 
 \item Given that the loss of algebraic information is successfully avoided, do we get finer topological invariants and mapping class group representations?
\end{enumerate}  
A few results which appeared in the early stages of the development of this new approach seemed to suggest negative answers to both questions. More precisely, it was first shown by Ohtsuki in a special case \cite{Oh95}, and later by Kerler in general \cite{K96b}, that if $\calC$ is a non-semisimple modular category, and if $M$ is a closed oriented 3-manifold with first Betti number $b_1(M) > 0$, then
\[
 \rmL_\calC(M) = 0.
\]
This is a deep obstruction for the definition of TQFT extensions. Indeed, it is a standard consequence of symmetric monoidality that the dimension of the state space $\rmV_\calC(\Sigma)$ of a closed oriented surface $\Sigma$ for a TQFT $\rmV_\calC$ extending $\rmL_\calC$ is
\[
 \dim_\Bbbk \rmV_\calC(\Sigma) = \rmL_\calC(\Sigma \times S^1).
\]
This means that Lyubashenko's invariant associated with a non-semisimple modular category $\calC$ cannot be extended to a TQFT, at least not in the sense of Atiyah. For years, the most involved achievement in a non-semisimple setting was a result of Lyubashenko and Kerler \cite{KL01} which produces a partial version of an Extended TQFT (ETQFT for short) satisfying only weak monoidality properties.\footnote{Very briefly, an ETQFT is a 2-categorical analogue of a TQFT, i.e. a symmetric monoidal 2-functor from a 2-category of cobordisms to a 2-category of linear categories (several choises are possible also for the target). Kerler and Lyubashenko's 2-functors are only defined for connected surfaces, with symmetric monoidal structure induced by connected sum instead of disjoint union.} Furthermore, it was later shown that, at least for small quantum $\fsl_2$, Hennings and Lyubashenko's invariants can be expressed as functions of Witten--Reshetikhin--Turaev ones \cite{CKS07}. More precisely, if $\calC$ is the category of finite-dimensional representations of $\bar{U}_q \fsl_2$, and if $\bar{\calC}$ is the semisimple quotient of $\calC$ inducing Witten's invariants, then for every 3-manifold $M$ we have
\[
 \rmL_\calC(M) = h_1(M) \RT_{\bar{\calC}}(M),
\]
where $h_1(M) = \left| H_1(M) \right|$ if $b_1(M) = 0$, and $h_1(M) = 0$ if $b_1(M) > 0$. Because of these hurdles, these deep works were underestimated for quite some time. This is not to say that the constructions were forgotten, as the number of publications containing reviews of Hennings' approach witnesses \cite{K96a,V03,H05}. However, a feeling that no additional topological information could be accessed through non-semisimple generalizations of Reshetikhin and Turaev's work started to spread.

\subsection*{Renormalized non-semisimple constructions}\label{SS:renormalized_non-semisimple}

A key ingredient that was missing from these early attempts at non-semisimple constructions was the theory of \textit{modified traces} developed by Geer and Patureau together with many collaborators, among which Turaev \cite{GPT07}, Kujawa \cite{GKP10}, and Virelizier \cite{GPV11}. Indeed, one way to explain the problems we face when trying to generalize Turaev's TQFT construction is that the categorical trace operation, which associates scalars to endomorphisms in a ribbon category $\calC$, becomes highly degenerate as soon as $\calC$ is non-semisimple. For instance, in this case the categorical trace of every endomorphism of any projective object is zero. Since this operation is pervasive in all quantum topological constructions, invariants of closed oriented 3-manifolds are bound to vanish a lot in non-semisimple settings. The idea behind \textit{renormalization} consists in replacing this highly degenerate operation with a non-degenerate analogue, called modified trace, which in general is only defined on proper tensor ideals of $\calC$. This approach was pioneered by Costantino, Geer, and Patureau \cite{CGP12} in a different non-semisimple setting with respect to Lyubashenko's. Indeed, an alternative (and also potentially non-semisimple) generalization of Turaev's notion of modular category, called \textit{relative modular category}, exists. Relative modular categories are so different from Lyubashenko's modular categories that the two notions intersect exclusively along Turaev's modular categories. In other words, if a category $\calC$ is both modular in the sense of Lyubashenko and relative modular, then it is modular in the sense of Turaev. The main examples of relative modular categories are associated with a different family of quantum groups, called \textit{unrolled quantum groups}. Starting from a relative modular category, one can define a topological invariant of closed oriented 3-manifolds decorated with so-called \textit{admissible cohomology classes} which is called the \textit{CGP invariant}. Focusing on the unrolled quantum group $U^H_q \fsl_2$, this construction was extended for the first time to non-semisimple graded TQFTs in \cite{BCGP14}. The adaptation of this result to arbitrary relative modular categories, as well as a complete extension to graded ETQFTs in general, is presented in \cite{D17}.

Following the success of these constructions, renormalization was soon applied to the theory of Hennings and Lyubashenko. This was done at first without any mention to modified traces, and thus independently of the work of Geer and Patureau, by Murakami \cite{M13}. The resulting invariants of so-called \textit{admissible knots} in 3-manifolds were extended to invariants of \textit{admissible links} in 3-manifolds, and reformulated in terms of modified traces, by Beliakova, Blanchet, and Geer \cite{BBG17}. Both of these works deal exclusively with the case of the restricted quantum group $\bar{U}_q \fsl_2$. A generalization of these constructions to arbitrary finite-dimensional factorizable ribbon Hopf algebras, as well as corresponding TQFT extensions, was first obtained in \cite{DGP17}. The adaptation of this theory to the general framework of Lyubashenko was later achieved in \cite{DGGPR19}, where TQFTs are constructed starting from arbitrary, potentially non-semisimple modular categories. This result recovers both Turaev's TQFTs, in the semisimple case, and Lyubashenko's mapping class group representations, in general. It should be remarked, however, that the renormalization process changes the underlying 3-manifold invariants drastically. In particular, a complete answer to question $(i)$ is that Lyubashenko's invariants cannot be extended to TQFTs, but that his mapping class group representations can.

The invariants and representations arising from the CGP and the renormalized Lyubashenko constructions are related to each other, see \cite{DGP18} for a discussion of the quantum group case. Both theories exhibit behaviors which mark a sharp contrast with respect to the semisimple case. First of all, the CGP invariant associated with $U^H_q \fsl_2$ for $q=i$ recovers the abelian Reidemeister torsion, which completely classifies lens spaces \cite{BCGP14}. This could not be achieved by any Witten--Reshetikhin--Turaev invariant. Furthermore, the action of Dehn twists along simple closed curves on states spaces associated with both $U^H_q \fsl_2$ and $\bar{U}_q \fsl_2$ have infinite order \cite{BCGP14,DGGPR20}. This is also something that never happens in the semisimple case, and which provides a positive answer to question $(ii)$. We point out that, to the best of our knowledge, these remarkable properties of Lyubashenko's mapping class group representations had never been noticed before the completion of the TQFT program.

We finish this section by stressing an important feature of non-semisimple constructions. The admissiblity conditions we mentioned in several places have to do with the fact that modified traces are generally defined only on proper tensor ideals. This means that, if we want to add them to our toolbox, we need decorations of 3-manifolds to meet certain requirements. Typically, decorations carry labels given by objects of a category $\calC$, and we need at least one of these labels to belong to the tensor ideal supporting the modified trace. It should be noted that these constraints also affect cobordism categories. Indeed, certain decorated cobordisms have to be removed from sets of morphisms, and this operation usually breaks the rigidity of a few objects. In other words, certain decorated surfaces are not dualizable in these so-called \textit{admissible decorated cobordism categories}. However, the symmetric monoidal structure induced by disjoint union remains intact, and so the resulting TQFTs are indeed symmetric monoidal in the sense of Atiyah.

\section{Turaev's notion of modular category}\label{S:T-mod_cat}

Using the language of \cite{EGNO15}, a \textit{T-modular category} (T for Turaev) is a ribbon fusion category whose S-matrix is invertible. The definition originally appeared in \cite[Sec.~II.1.2]{T94} in a slightly more general form, but the version reported here has since become rather standard (compare with \cite{BK01,TV17}). In order to better grasp this notion, let us proceed to unpack the terminology. 

First of all, a \textit{ribbon} category $\calC$ is a braided rigid monoidal category with a twist. This means a ribbon category comes equipped with:
\begin{enumerate}
 \item a tensor product $\otimes : \calC \times \calC \to \calC$ and a tensor unit $\one \in \calC$;
 \item duality morphisms $\lev_X : X^* \otimes X \to \one$, $\lcoev_X : \one \to X \otimes X^*$, and $\rev_X : X \otimes X^* \to \one$, $\rcoev_X : \one \to X^* \otimes X$ for every $X \in \calC$;
 \item braiding morphisms $c_{X,Y} : X \otimes Y \to Y \otimes X$ for all $X,Y \in \calC$;
 \item twist morphisms $\theta_X : X \to X$ for every $X \in \calC$.
\end{enumerate}
All these structure data are subject to several axioms, which can be found explicilty in \cite[Sec.~2.1, 2.10, 8.1, 8.10]{EGNO15}, but which can also be understood efficiently using a more visual approach. Indeed, morphisms of a ribbon category can be represented using an extension of \textit{Penrose graphical notation} and \textit{string diagrams}. Structure morphisms listed above are represented by elementary diagrams
\begin{align*}
 \id_X &= \pic{graphical_rep_id} &
 \id_{X^*} &= \pic{graphical_rep_id_dual} \\[2pt]
 \lev_X &= \pic{graphical_rep_lev} &
 \lcoev_X &= \pic{graphical_rep_lcoev} \\[2pt]
 \rev_X &= \pic{graphical_rep_rev} &
 \rcoev_X &= \pic{graphical_rep_rcoev} \\[2pt]
 c_{X,Y} &= \pic{graphical_rep_braiding} &
 \theta_X &= \pic{graphical_rep_twist}
\end{align*}
In general, a morphism $f : X_1 \otimes \ldots \otimes X_m \to Y_1 \otimes \ldots \otimes Y_n$ which is not expressed as a combination of structure morphisms is simply placed inside a box 
\begin{equation}\label{E:coupon}
 f = \pic{graphical_rep_coupon}
\end{equation}
These diagrams are read from bottom to top.\footnote{Remark that the convention used here for orientations is opposite to the one used by Turaev, but also widely used.} Composition is given by vertical stacking, while tensor product corresponds to horizontal juxtaposition. Then, the axioms of a ribbon category translate precisely to the invariance of this diagrammatic calculus under isotopy and framed Reidemeister moves.

Next, let us fix an algebraically closed\footnote{This condition is not necessary, but convenient.} field $\Bbbk$. Throughout this paper, we will use the term ``linear'' as a shorthand for ``$\Bbbk$-linear''. Then, let us consider a linear category $\calC$, which is a category whose sets of morphisms carry vector space structures making compositions into bilinear maps. We say that $\calC$ is \textit{finite} if it is equivalent to the category of finite-dimensional representations of a finite-dimensional algebra. This definition is rather implicit, as no algebra is attached to $\calC$, but it can be reformulated in more explicit terms. In fact, as explained in \cite[Def.~1.8.6]{EGNO15}, $\calC$ is finite if and only if it is abelian and:
\begin{enumerate}
 \item it has a finite number of isomorphism classes of simple objects;
 \item it has enough projectives;
 \item its objects have finite lenght; 
 \item its morphism spaces have finite dimension.
\end{enumerate}
A \textit{fusion category} is a finite semisimple monoidal linear category. Therefore, it is this assumption that encodes the semisimplicity of T-modular categories. 

Finally, for every ribbon fusion category we can define an \textit{S-matrix} given by 
\[
 S(\calC) := \left( \pic{S-matrix} \right)_{i,j \in I}
\]
where $\Gamma(\calC) = \left\{ V_i \mid i \in I \right\}$ is a set of representatives of isomorphism classes of simple objects of $\calC$. We say a ribbon fusion category $\calC$ is T-modular if $S(\calC)$ is invertible. This requirement can be understood as a non-degeneracy condition for the braiding of a ribbon fusion category.

We finish this section with an additional definition. The \textit{Müger center} $M(\calC)$ of a ribbon category $\calC$ is the full subcategory of so-called \textit{transparent} objects of $\calC$, that is, $X \in M(\calC)$ if $c_{Y,X} \circ c_{X,Y} = \id_{X \otimes Y}$ for every $Y \in \calC$. The Müger center of a ribbon category $\calC$ is said to be trivial if every transparent object of $\calC$ is isomorphic to a direct sum of copies of the tensor unit $\one$. Then, a ribbon fusion category $\calC$ has trivial Müger center if and only if $S(\calC)$ is invertible \cite[Prop.~1.1]{B00}. This equivalent non-degeneracy condition for the braiding of $\calC$ will be useful later on.

\section{Semisimple quantum invariants and TQFTs}\label{S:TQFT_sem-sim}

The diagrammatic interpretation for morphisms of ribbon categories which was discussed earlier highlights the fundamental interplay between algebra and topology arising in this context. In fact, it is a result of Turaev that certain topological gadgets called \textit{ribbon graphs} provide the relevant topological structure underlying Penrose graphical notation \cite[Thm.~I.2.5]{T94}. Indeed, isotopy classes of ribbon graphs can be organised as morphisms of a category $\calR_\calC$, and the graphical calculus introduced above can be implemented by a functor
\[
 F_\calC : \calR_\calC \to \calC
\]
called the \textit{Reshetikhin--Turaev functor}. This technology plays a fundamental role in every construction discussed in this survey.

Starting from a T-modular category $\calC$, Turaev showed how to construct a topological invariant $\RT_\calC$ of closed oriented 3-manifolds decorated with closed\footnote{A ribbon graph is \textit{closed} if it has no boundary vertices, and thus represents an endomorphism of the tensor unit $\one$ of $\calC$ under the Reshetikhin--Turaev functor $F_\calC$.} ribbon graphs. By their very definition, ribbon graphs have a rather hybrid nature, between topology and algebra. This of course can be seen as a strenght of the theory, which allows for great flexibility. However, for the benefit of those who are looking for a purely topological point of view, the construction can be understood as producing, in particular, a family $\{ \RT_V \}_{V \in \calC}$ of topological invariants of framed oriented links inside 3-manifolds, where each invariant $\RT_V$ is obtained by choosing the object $V \in \calC$ as a label for every component of the link. The definition of these topological invariants is based on surgery presentations, which allow us to interpret framed links in $S^3$ as closed oriented 3-manifolds. The main idea consists in choosing orientations and labels for a surgery presentation, and then using the Reshetikhin--Turaev functor $F_\calC$ in order to extract a scalar. The choice needs to be made in such a way that the resulting scalar is invariant under orientation reversals and Kirby moves. Turaev came up with such a label, which is given by a formal linear combination of objects of $\calC$ called the \textit{Kirby color}, and given by
\[
 \Omega := \sum_{i \in I} \dim_\calC(V_i) \cdot V_i,
\]
where again $\Gamma(\calC) = \left\{ V_i \mid i \in I \right\}$ is a set of representatives of isomorphism classes of simple objects of $\calC$, and where
\[
 \dim_\calC(X) = \tr_\calC(\id_X) = \rev_X \circ \lcoev_X
\]
is the \textit{categorical dimension} of an object $X \in \calC$, which coincides with the \textit{categorical trace} of its identity endomorphism. Labeling components of a framed oriented link using the Kirby color is a shortcut that has to be understood through multilinear expansions. More precisely, if $L$ is a framed oriented link featuring a component $K$ labeled by $\Omega$, and if $L_i$ is obtained from $L$ by replacing the label of $K$ with $V_i \in \Gamma(\calC)$, then we set
\[
 F_\calC(L) := \sum_{i \in I} \dim_\calC(V_i) F_\calC(L_i).
\]
This oriented framed link invariant is already preserved under Kirby II moves, as the terminology might suggest. In order to obtain invariance under Kirby I moves, we need a normalization coefficient. This is defined in terms of the so-called \textit{stabilization coefficients}
\[
 \Delta_+ := F_\calC \left( \pic{stabilization_coeff_+_black} \right) \in \Bbbk, \qquad
 \Delta_- := F_\calC \left( \pic{stabilization_coeff_-_black} \right) \in \Bbbk.
\]
As a consequence of T-modularity, their product $\Delta_+ \Delta_-$ is invertible, and so we can choose two parameters $\calD,\delta \in \Bbbk^\times$ satisfying
\[
 \calD^2 = \Delta_+ \Delta_-, \qquad 
 \delta = \frac{\Delta_+}{\calD} = \frac{\calD}{\Delta_-}.
\]
The definition of the topological invariant $\RT_\calC$ depends on the choice of $\calD$, which uniquely determines $\delta$. If $M$ is a closed oriented 3-manifold, if $T \subset M$ is a closed ribbon graph, and if $L \subset S^3$ is a surgery presentation of $M$, which we assume having $\ell$ components and signature $\sigma$, and which we equip with an arbitrary orientation and the label $\Omega$ on every component, then
\[
 \RT_\calC(M,T) := \calD^{-1-\ell} \delta^{-\sigma} F_\calC(L \cup T)
\]
is a topological invariant of the pair $(M,T)$.

The universal construction of \cite{BHMV95} extends the invariant $\RT_\calC$ to a dual pair of TQFTs given by a covariant functor $\rmV_\calC$ and a contravariant one $\rmV'_\calC$. This means that, for a closed oriented surface $\Sigma$, we obtain dual vector spaces $\rmV_\calC(\Sigma)$ and $\rmV'_\calC(\Sigma)$ supporting respectively left and right actions of decorated cobordisms. If $\bar{\Sigma}$ is obtained from $\Sigma$ by reversing its orientation, we have a natural isomorphism $\rmV'_\calC(\Sigma) \cong \rmV_\calC(\bar{\Sigma})$. This means we can simply focus on the covariant TQFT $\rmV_\calC$. If $\Sigma_g$ is a closed oriented surface of genus $g$, then
\begin{equation}\label{E:st_sp_sem-sim}
 \rmV_\calC(\Sigma_g) \cong \bigoplus_{i_1,\ldots,i_g \in I} \calC \left( \one,\bigotimes_{j=1}^g V_{i_j} \otimes V_{i_j}^* \right).
\end{equation}
Since $\calC$ is semisimple, this vector space can be rewritten, using fusion rules, in terms of labelings of trivalent graphs, with different graphs determining different decompositions. If $\calG$ is an oriented trivalent graph, whose sets of vertices and egdes are denoted $\calV$ and $\calE$ respectively, then, for every vertex $v \in \calV$, we denote with $v_-$ the set of incoming germs of edges with target $v$, and we denote with $v_+$ the set of outgoing germs of edges with source $v$. A fundamental labeling of $\calE$ is a function $i : \calE \rightarrow I$, and for every $v \in \calV$ the morphism space
\[
 \calC(i,v) := \calC \left( \bigotimes_{e \in v_-} V_{i(e)},\bigotimes_{e \in v_+} V_{i(e)} \right)
\]
is called the \textit{multiplicity module} of $i$ in $v$. Then, let us consider the embedded oriented trivalent graph
\begin{equation}\label{E:triv_graph}
 \calG_g := \pic{trivalent_graph} \subset \R^3, 
\end{equation}
and let us denote with $\calV_g$ and $\calE_g$ its sets of vertices and edges respectively. We should think of $\Sigma_g$ as being identified with the boundary of a regular neighborhood of $\calG_g$. Then, the state space $\rmV_\calC(\Sigma_g)$ decomposes as a direct sum of tensor products of multiplicity modules
\begin{equation}\label{E:E:st_sp_sem-sim_dec}
 \rmV_\calC(\Sigma_g) \cong \bigoplus_{i : \calE_g \to I} \bigotimes_{v \in \calV_g} \calC(i,v).
\end{equation}

\section{T-modular categories from small quantum \texorpdfstring{$\mathfrak{sl}_2$}{sl(2)}}\label{S:sl_2_sem-sim}

The Witten--Reshetikhin--Turaev TQFT was first constructed from the restricted quantum group of $\fsl_2$ at even roots of unity. A careful exposition can be found in \cite{T94}, while a popular alternative construction, based on Jones--Kauffman skein theory, can be found in \cite{BHMV95}. This skein construction also produces TQFTs when the quantum parameter is an odd root of unity. Referring to Witten's gauge picture, these are called $\SO(3)$-TQFTs. We choose this family as a demonstrative example, and develop in this section the construction of the underlying modular categories, corresponding to the so-called small quantum group of $\fsl_2$ at odd roots of unity.

We start by fixing an odd integer $3 \leqs r \in \Z$, and we consider the primitive $r$th root of unity $q = e^{\frac{2 \pi i}{r}}$. For every integer $k \geqs 0$ we introduce the notation 
\[
 \{ k \} := q^k - q^{-k},
 \quad \{ k \}' := q^k + q^{-k},
 \quad [k] := \frac{\{ k \}}{\{ 1 \}},
 \quad [k]! := \prod_{j=1}^k [j].
\]
The small quantum group $\bar{U}_q \fsl_2$, first defined by Lusztig in \cite{Lu90}, can be constructed as the $\C$-algebra with generators $\{ E,F,K \}$ and relations
\begin{gather*}
 E^r = F^r = 0, \qquad K^r = 1, \\*
 K E K^{-1} = q^2 E, \qquad K F K^{-1} = q^{-2} F, \qquad [E,F] = \frac{K - K^{-1}}{q-q^{-1}},
\end{gather*}
and with Hopf algebra structure obtained by setting
\begin{align*}
 \Delta(E) &= E \otimes K + 1 \otimes E, & \varepsilon(E) &= 0, & S(E) &= -E K^{-1}, \\*
 \Delta(F) &= K^{-1} \otimes F + F \otimes 1, & \varepsilon(F) &= 0, & S(F) &= - K F, \\*
 \Delta(K) &= K \otimes K, & \varepsilon(K) &= 1, & S(K) &= K^{-1}.
\end{align*}
Remark that Lusztig considers the opposite coproduct, while we are using the one of Kassel \cite[Sec.~VII.1]{K95}. A basis of $\bar{U}_q \fsl_2$ is given by
\[
 \left\{ E^a F^b K^c \mid \ 0 \leqs a,b,c \leqs r - 1 \right\},
\]
as proved in \cite[Thm.~5.6]{Lu90}. Furthermore, $\bar{U}_q \fsl_2$ supports the structure of a ribbon Hopf algebra. Indeed, an \textit{R-matrix} $R \in \bar{U}_q \fsl_2 \otimes \bar{U}_q \fsl_2$ is given by
\begin{align*}
 R &:= \frac{1}{r} \sum_{a,b,c=0}^{r-1} \frac{\{ 1 \}^a}{[a]!}
 q^{\frac{a(a-1)}{2} - 2bc} K^b E^a \otimes K^c F^a, \\*
 R^{-1} &\phantom{:}= \frac{1}{r} \sum_{a,b,c=0}^{r-1} \frac{\{ -1 \}^a}{[a]!}
 q^{-\frac{a(a-1)}{2} + 2bc} E^a K^b \otimes F^a K^c,
\end{align*}
while a \textit{ribbon element} $v \in \bar{U}_q \fsl_2$ is given by
\begin{align*}
 v &:= \frac{i^{\frac{r-1}{2}}}{\sqrt{r}} \sum_{a,b = 0}^{r - 1} \frac{\{ -1 \}^a}{[a]!} q^{-\frac{a(a-1)}{2} + \frac{(r+1)(a-b-1)^2}{2}} F^a K^b E^a, \\*
 v^{-1} &\phantom{:}= \frac{i^{-\frac{r-1}{2}}}{\sqrt{r}} \sum_{a,b = 0}^{r - 1} \frac{\{ 1 \}^a}{[a]!} q^{\frac{a(a-1)}{2} + \frac{(r-1)(a+b-1)^2}{2}} F^a K^b E^a.
\end{align*}
An explicit formula for $R$ first appeared in \cite[Sec.~3.5]{R93}, see also \cite[Sec.~4.5]{Oh02} for a different one. The expression for $v$ reported here can be obtained by adapting the proof of \cite[Thm.~4.1.1]{FGST05}. A \textit{pivotal element} $g \in \bar{U}_q \fsl_2$ is given by $g := K$. It is proved in \cite[Prop.~XIV.6.5]{K95} that this choice defines a compatible ribbon structure, in the sense that $v = ug^{-1}$ for the \textit{Drinfeld element} $u \in \bar{U}_q \fsl_2$ given by $u := S(R'')R'$, where the notation $R = R' \otimes R''$ hides a sum. Thus the category $\calC = \mods{\bar{U}_q \fsl_2}$ of finite-dimensional left $\bar{U}_q \fsl_2$-modules is a ribbon category.

For every integer $0 \leqs n \leqs r-1$ we denote with $V_n$ the $\bar{U}_q \fsl_2$-module with basis 
\[
 \{ a_i^n \in V_n \mid 0 \leqs i \leqs n \}
\]
and action given, for all integers $0 \leqs i \leqs n$, by
\begin{equation*}
 \begin{split}
  K \cdot a_i^n &= q^{n-2i} a_i^n, \\
  E \cdot a_i^n &= [i][n-i+1] a_{i-1}^n, \\
  F \cdot a_i^n &= a_{i+1}^n,
 \end{split}
\end{equation*}
where $a_{-1}^n := a_{n+1}^n := 0$. These $\bar{U}_q \fsl_2$-modules are all simple and self-dual, and every simple object of $\calC$ is isomorphic to $V_n$ for some $0 \leqs n \leqs r-1$. Their categorical dimension is
\[
 \dim_\calC(V_n) = \tr_\calC(\id_{V_n}) = [n+1].
\]
We will see later that the category $\calC$ is not semisimple. However, starting from $\calC$, we can obtain a fusion category as follows. Let us set
\[
 I = \left\{ 2n \Bigm\vert 0 \leqs n \leqs \frac{r-3}{2} \right\},
\]
let us consider the set of simple objects
\[
 \Gamma(\bar{\calC}) = \{ V_i \in \calC \mid i \in I \}, 
\] 
and let $\tilde{\calC}$ denote the full monoidal subcategory of $\calC$ generated by $\Gamma(\bar{\calC})$. A morphism $f \in \calC(V,W)$ is \textit{negligible} if
\[
 \tr_\calC(g \circ f) = 0
\]
for every morphism $g \in \calC(W,V)$. An example of a negligible morphism is the identity $\id_{V_{r-1}}$ of the so-called \textit{Steinberg module} $V_{r-1}$. Negligible morphisms of $\tilde{\calC}$ form a linear congruence\footnote{The term \textit{tensor ideal} is often used instead. Remark however that there are two concurring notions of tensor ideal, with the other one arising in the context of modified traces. In order to avoid confusion, we use Mac Lane's terminology here.} $\calN(\tilde{\calC})$ in $\tilde{\calC}$, see \cite[Sec.~II.8]{M71}. This means that we can define a quotient category $\bar{\calC} := \tilde{\calC} / \calN(\tilde{\calC})$ whose objects coincide with those of $\tilde{\calC}$, and whose morphism spaces are given by
\[
 \bar{\calC}(V,W) := \tilde{\calC}(V,W)/\calN(\tilde{\calC})(V,W).
\]

\begin{proposition}
 The category $\bar{\calC}$ is T-modular.
\end{proposition}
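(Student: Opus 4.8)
The plan is to verify the two defining ingredients of T-modularity for $\bar{\calC}$: that it is a ribbon fusion category, and that its S-matrix $S(\bar{\calC})$ is invertible. I would handle the categorical structure first and reserve an explicit linear-algebra computation for the invertibility. Since $\calN(\tilde{\calC})$ is a linear congruence compatible with tensor product, braiding, twist, and duality (the trace condition defining negligibility is preserved by all these operations), the ribbon structure of $\tilde{\calC}$ descends to the quotient $\bar{\calC}$. To see that $\bar{\calC}$ is fusion, I would argue that $\bar{\calC}$ is the semisimplification of $\tilde{\calC}$: modulo negligible morphisms, every object becomes a direct sum of the $V_i$ with $i\in I$. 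Concretely, one decomposes the $\bar{U}_q\fsl_2$-module $V_{2a}\otimes V_{2b}$ into a direct sum of simples $V_{2c}$ together with indecomposable projectives; the projective summands, as well as the Steinberg module $V_{r-1}$ (whose dimension is $[r]=0$), carry vanishing categorical trace, so their identities are negligible and they are killed in the quotient. What survives is the truncated $\fsl_2$ fusion rule $V_{2a}\otimes V_{2b}\cong\bigoplus_c V_{2c}$ in $\bar{\calC}$ with $2c\in I$. Combined with $\dim_\calC(V_{2a})=[2a+1]\neq 0$ (so each $V_{2a}$ stays nonzero and simple after quotienting) and $V_0=\one$, this exhibits $\bar{\calC}$ as a ribbon fusion category whose simple objects are indexed by $I$.

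Next I would record the S-matrix entries. The $(i,j)$-entry of $S(\bar{\calC})$ is the Reshetikhin--Turaev invariant of the Hopf link colored by $V_i$ and $V_j$; using the explicit $R$-matrix and ribbon element, this is the classical $\fsl_2$ value $S(\bar{\calC})_{i,j}=[(i+1)(j+1)]$. Writing $i=2a$, $j=2b$ with $0\leqs a,b\leqs\frac{r-3}{2}$, this reads $S(\bar{\calC})_{2a,2b}=[(2a+1)(2b+1)]$, and in particular the first row recovers the quantum dimensions $[2b+1]$.

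The heart of the proof, and the step I expect to be the main obstacle, is the invertibility. Setting $\theta_a=\frac{2\pi(2a+1)}{r}$, one has $[(2a+1)(2b+1)]=\frac{\sin((2b+1)\theta_a)}{\sin(2\pi/r)}$ and $[2a+1]=\frac{\sin\theta_a}{\sin(2\pi/r)}$, whence $S(\bar{\calC})_{2a,2b}=[2a+1]\,U_{2b}(\cos\theta_a)$, where $U_{2b}$ denotes the Chebyshev polynomial of the second kind. In matrix form, $S(\bar{\calC})=\mathrm{diag}([2a+1])\cdot P$ with $P_{ab}=U_{2b}(\cos\theta_a)$. The diagonal factor is invertible since $[2a+1]\neq 0$ for $0\leqs a\leqs\frac{r-3}{2}$. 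As $U_{2b}$ is even of degree $2b$ with nonzero leading coefficient, the family $\{U_{2b}\}$ is a triangular change of basis from the monomials $\{(\cos^2\theta_a)^{k}\}$, so up to a nonzero constant $\det P$ equals the Vandermonde determinant in the nodes $\cos^2\theta_a$. Invertibility thus reduces to the claim that these nodes are pairwise distinct, which is where the arithmetic of $r$ enters: $\cos^2\theta_a=\cos^2\theta_{a'}$ forces $2a+1\equiv\pm(2a'+1)\pmod r$; the $+$ case gives $a=a'$, while the $-$ case would require $a+a'+1\equiv 0\pmod r$, which is impossible because $1\leqs a+a'+1\leqs r-2$. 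Hence the nodes are distinct, $P$ and therefore $S(\bar{\calC})$ are invertible, and $\bar{\calC}$ is T-modular. As a self-contained alternative to the last computation, one could instead verify directly that the only transparent simple object of $\bar{\calC}$ is $\one$ and invoke the Müger-center criterion recalled at the end of Section \ref{S:T-mod_cat}, but the Vandermonde argument above avoids any appeal to that equivalence.
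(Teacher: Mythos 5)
Your proposal is correct, but it proves the key invertibility step by a genuinely different route from the paper. The paper invokes Bruguières' criterion (\cite[Prop.~1.1]{B00}): it suffices to show no column of $S(\bar{\calC})$ is colinear to the one of the tensor unit, which it verifies by the Gauss-sum identity $\sum_{i=0}^{(r-3)/2}[2i+1]S_{i,j}=-\delta_{0,j}\,r/\{1\}^2$, a weighted column sum that vanishes exactly when $j \neq 0$. You instead compute the determinant directly: the factorization $S(\bar{\calC})=\mathrm{diag}([2a+1])\cdot P$ with $P_{ab}=U_{2b}(\cos\theta_a)$, the triangular change of basis from even Chebyshev polynomials to monomials in $\cos^2\theta_a$, and the resulting Vandermonde determinant are all correct, as is the arithmetic showing the nodes $\cos^2\theta_a$ are pairwise distinct (note that the oddness of $r$ enters your argument at exactly the point where it enters the paper's Gauss sum, in excluding $a+a'+1\equiv 0 \pmod r$). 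Your route is self-contained and yields strictly more information (an explicit nonvanishing determinant and diagonalizing nodes), at the cost of more computation; the paper's is shorter because the colinearity criterion reduces everything to one row-sum identity. For semisimplicity, the paper simply cites \cite[Cor.~4.1]{A92}, whereas you sketch the semisimplification argument via the tensor decomposition of $V_{2a}\otimes V_{2b}$; this is legitimate (it is essentially the content of the paper's second proposition, after \cite{S92}), but one small ellipsis deserves flagging: for the non-simple projectives $P_{2n}$, vanishing categorical dimension alone does not make $\id_{P_{2n}}$ negligible — you need $\tr_\calC(g)=0$ for \emph{every} $g\in\End_\calC(P_{2n})$, including the nilpotent socle map. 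This follows either from the general fact, recorded in Section \ref{S:history} of the paper, that the categorical trace of every endomorphism of a projective object vanishes in the non-semisimple setting, or from the paper's argument that $P_{2n}$ is a retract of $V_{r-1}\otimes V_{r-1}$ and negligibles form a congruence; for the simple Steinberg module your dimension argument is complete as stated.
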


\begin{proof}
 It follows from \cite[Cor.~4.1]{A92} that $\bar{\calC}$ is semisimple. This means that $\bar{\calC}$ is a ribbon fusion category, and we only need to check that its S-matrix $S(\bar{\calC})$ is invertible. Thanks to \cite[Prop.~1.1]{B00}, this happens if and only if none of the columns of $S(\bar{\calC})$ is colinear to the one corresponding to the tensor unit $\one$. Then, let us consider the matrix
 \[
  S(\calC) := \left( \pic{S-matrix} \right)_{0 \leqs i,j \leqs r-1}.
 \]
 It follows from an easy computation, which mimics \cite[Lem.~3.27]{KM91}, that
 \[
  S(\calC)_{i,j} = [(i+1)(j+1)]
 \]
 for all $0 \leqs i,j \leqs r-1$. Remark that this formula explains why we need to restrict ourselves to the full subcategory $\tilde{\calC}$ of $\calC$. Indeed, the column $(S_{i,j})_{0 \leqs j \leqs r-1}$ is colinear to the column $(S_{r-i-2,j})_{0 \leqs j \leqs r-1}$ for every integer $0 \leqs i \leqs r-1$.  We claim now that the S-matrix $S(\bar{\calC})$ is given by the submatrix 
 \[
  S := \left( S(\calC)_{2i,2j} \right)_{0 \leqs i,j \leqs \frac{r-3}{2}}
 \]
 of $S(\calC)$. The fact that no column of $S$ is colinear to the first one follows from
 \begin{align*}
  &\sum_{i=0}^{\frac{r-3}{2}} [2i+1] S_{i,j} = \sum_{i=0}^{\frac{r-3}{2}} [2i+1][(2i+1)(2j+1)] \\
  &\hspace*{\parindent} = \frac 12 \left( \sum_{i=0}^{\frac{r-3}{2}} [2i+1][(2i+1)(2j+1)] + \sum_{i=0}^{\frac{r-3}{2}} [2i+1][(2i+1)(2j+1)] \right) \\
  &\hspace*{\parindent} = \frac 12 \left( \sum_{i=0}^{\frac{r-3}{2}} [2i+1][(2i+1)(2j+1)] + \sum_{i=0}^{\frac{r-3}{2}} [r-2i-1][(r-2i-1)(2j+1)] \right) \\
  &\hspace*{\parindent} = \frac 12 \left( \sum_{i=0}^{\frac{r-3}{2}} [2i+1][(2i+1)(2j+1)] + \sum_{i=1}^{\frac{r-1}{2}} [2i][2i(2j+1)] \right) \\
  &\hspace*{\parindent} = \frac 12 \sum_{i=0}^{r-1} [i][i(2j+1)] 
  = \frac 12 \sum_{i=0}^{r-1} \frac{\{ i \} \{ i(2j+1) \}}{\{ 1 \}^2} 
  = \frac 12 \sum_{i=0}^{r-1} \frac{\{ 2i(j+1) \}' - \{ 2ij \}'}{\{ 1 \}^2} \\ 
  &\hspace*{\parindent} = - \delta_{0,j} \frac{r}{\{ 1 \}^2}. \qedhere
 \end{align*}
\end{proof}

It follows then from the construction that $\Gamma(\bar{\calC})$ is a set of representatives of isomorphism classes of simple objects of $\bar{\calC}$. A direct computation gives the \textit{stabilization parameters}
\begin{align*}
 \Delta_- &= -\frac{i^{\frac{r-1}{2}} r^{\frac 12} q^{\frac{r+3}{2}}}{\{ 1 \}}, &
 \Delta_+ &= \frac{i^{-\frac{r-1}{2}} r^{\frac 12} q^{\frac{r-3}{2}}}{\{ 1 \}},
\end{align*}
and we fix the square root
\[
 \calD = \frac{i r^{\frac 32}}{\{ 1 \}}
\]
of the product $\Delta_+ \Delta_-$, which uniquely determines the coefficient
\[
 \delta = i^{-\frac{r+1}{2}} q^{\frac{r-3}{2}},
\]
as well as the corresponding normalization of $\RT_{\bar{\calC}}$. 

Thanks to equation \eqref{E:E:st_sp_sem-sim_dec}, state spaces of closed oriented surfaces for the TQFT $\rmV_{\bar{\calC}}$ extending $\RT_{\bar{\calC}}$ can be written as direct sums of tensor products of multiplicity modules. It turns out that all multiplicity modules in $\bar{\calC}$ have dimension at most one. In particular, if $\Sigma_g$ is a closed oriented surface of genus $g$, then a basis of $\rmV_{\bar{\calC}}(\Sigma_g)$ is determined by so-called $r$-triangular labelings of the set of edges $\calE_g$ of the graph $\calG_g$ represented in \eqref{E:triv_graph}. If $v \in \calV_g$ is a vertex, and $v_- \cup v_+ = \{ e,e',e'' \}$ is the set of germs of edges which are incident to $v$, then a labeling $i : \calE_g \rightarrow I$ is said to be \textit{$r$-triangular in $v$} if it satisfies the inequalities
\begin{equation*}
 \lvert i(e')-i(e'') \rvert \leqs i(e) \leqs i(e')+i(e''), \qquad i(e)+i(e')+i(e'') < 2r-2.
\end{equation*}
A labeling $i : \calE_g \rightarrow I$ is \textit{$r$-triangular}\footnote{Such a labeling is called \textit{$r$-admissible} in \cite{BHMV95}, but we change terminology here because admissibility will be a different condition, and a key one, in the non-semisimple construction.} if it is $r$-triangular in every $v \in \calV_g$. Thanks to the next statement, the dimension of $\rmV_{\bar{\calC}}(\Sigma_g)$ is obtained by counting $r$-triangular labelings of $i : \calE_g \rightarrow I$.

\begin{proposition}
 The multiplicity module
 \[
  \bar{\calC}(i,v) \cong \bar{\calC}(\one,V_{i(e)} \otimes V_{i(e')} \otimes V_{i(e'')})
 \]  
 has dimension one if $i$ is $r$-triangular in $v$, and zero otherwise.
\end{proposition}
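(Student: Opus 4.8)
The plan is to reduce the statement to a fusion-multiplicity computation in the semisimple category $\bar{\calC}$, and then to match it with the truncated Clebsch--Gordan rule for $\bar{U}_q \fsl_2$. Using rigidity to bring all three incident edge-labels to one side of a Hom-space, and the self-duality of the modules $V_n$ to disregard the orientation of the germs, one first identifies the multiplicity module with $\bar{\calC}(\one, V_a \otimes V_b \otimes V_c)$, where $a := i(e)$, $b := i(e')$, $c := i(e'')$ all lie in $I$ and are therefore even. Continuing with rigidity and self-duality,
\[
 \bar{\calC}(\one, V_a \otimes V_b \otimes V_c) \cong \bar{\calC}(V_a, V_b \otimes V_c),
\]
whose dimension, by semisimplicity and simplicity of $V_a$, is the multiplicity $N_{bc}^a$ of $V_a$ in $V_b \otimes V_c$. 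As the $V_n$ are self-dual, $N_{bc}^a$ is symmetric in $a,b,c$, matching the symmetric shape of the $r$-triangular inequalities, so it suffices to show that $N_{bc}^a = 1$ exactly when $|b-c| \leqs a \leqs b+c$ and $a+b+c < 2r-2$, and $N_{bc}^a = 0$ otherwise.

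To compute $N_{bc}^a$ I would start from the Clebsch--Gordan decomposition in $\calC$: the tensor product $V_b \otimes V_c$ splits as the direct sum of the simple modules $V_a$ with $a$ running through $|b-c|, |b-c|+2, \ldots, b+c$, each with multiplicity one, together with projective indecomposable summands which appear only once $b+c$ exceeds the Steinberg value $r-1$. Projective objects are negligible, so passing to the quotient $\bar{\calC}$ annihilates exactly these summands. This already forces $N_{bc}^a \leqs 1$, with the triangle inequalities (and the parity constraint, automatic here since $a,b,c$ are even) dictating which simple summands can appear at all; the multiplicity-one half of the claim is thus inherited directly from the classical rule.

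The subtle point, which I expect to be the main obstacle, is establishing the exact truncation $a+b+c < 2r-2$: I must verify that the high-weight simple summands $V_a$ violating this bound are precisely the ones reabsorbed into projective modules, and hence killed in $\bar{\calC}$. This is controlled by the linkage and reflection structure of $\bar{U}_q \fsl_2$ at a root of unity, under which weights are reflected across $r-1$; the surviving constituents are those in the lowest alcove, cut out for even labels by $a+b+c \leqs 2(r-2)$, equivalently $a+b+c < 2r-2$. An alternative that avoids this reflection analysis is to invoke the Verlinde formula: using $S(\calC)_{i,j} = [(i+1)(j+1)]$ restricted to $I$, together with the orthogonality relation from the proof of the previous proposition, one writes
\[
 N_{bc}^a = \sum_{k \in I} \frac{S_{ak} S_{bk} S_{ck}}{\calD^2 \, S_{0k}},
\]
and evaluates the sum, so that the triangle and truncation inequalities instead emerge from the vanishing pattern of a sum of products of the quantum integers $\{ k \}$ and $[k]$.
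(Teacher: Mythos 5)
Your reduction to the fusion multiplicity $N_{bc}^a$ and your main strategy --- decompose $V_b \otimes V_c$ in $\calC$, observe that the projective summands die in the quotient $\bar{\calC}$ --- coincide with the paper's proof. But there is a genuine gap at exactly the point you flag as ``the main obstacle'', and moreover the decomposition you start from is false as stated. You claim $V_b \otimes V_c$ splits as \emph{all} the simples $V_a$ with $a = \lvert b-c \rvert, \lvert b-c \rvert + 2, \ldots, b+c$, each once, \emph{plus} projective summands when $b + c \geqs r-1$. A dimension count rules this out: the classical string of simples alone already has total dimension $(b+1)(c+1) = \dim (V_b \otimes V_c)$, so adding any $P_{2n}$ (of dimension $2r$) overshoots. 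The correct statement, which the paper adapts from \cite{S92}, is
\[
 V_b \otimes V_c \cong \bigoplus_{n = \frac{\lvert b-c \rvert}{2}}^{\min \left\{ \frac{b+c}{2},\, r-\frac{b+c}{2}-2 \right\}} V_{2n} \oplus \bigoplus_{n = r-\frac{b+c}{2}-1}^{\frac{r-1}{2}} P_{2n},
\]
in which the high-weight classical constituents $V_a$ with $a+b+c \geqs 2r-2$ are not ``reabsorbed'' simple summands at all: they occur only as composition factors of the $P_{2n}$. Since this truncated decomposition \emph{is} the content of the proposition once projectives are killed, deferring its verification to an unexecuted linkage analysis leaves the proof incomplete precisely at its crux.

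Your two sketched repairs are both viable but neither is carried out. The reflection rule $N_{bc}^a = m_{bc}^a - m_{bc}^{2r-2-a}$ (classical multiplicities, $\rho$-shifted reflection across $r$) does reproduce the condition $a+b+c < 2r-2$, but proving it for $\bar{U}_q \fsl_2$ is essentially equivalent to establishing the displayed decomposition. The Verlinde alternative has an additional unacknowledged prerequisite: the formula $N_{bc}^a = \calD^{-2} \sum_k S_{ak} S_{bk} S_{ck} / S_{0k}$ uses $S^{-1} = \calD^{-2} S$, i.e.\ the full relation $S^2 = \calD^2 \Id$ for self-dual simples, which is strictly more than the single column-orthogonality relation $\sum_i [2i+1] S_{i,j} = -\delta_{0,j}\, r / \{1\}^2$ proved for the preceding proposition --- and the resulting trigonometric sum would still have to be evaluated. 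Finally, your assertion that ``projective objects are negligible'' needs a justification inside $\tilde{\calC}$; the paper supplies it by noting that each $P_{2n}$ is a direct summand of $V_{r-1} \otimes V_{r-1}$ and that $\id_{V_{r-1}}$ is negligible, so the congruence property of $\calN(\tilde{\calC})$ applies.
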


\begin{proof}
 Since $\bar{\calC}$ is a fusion category, the dimension of $\bar{\calC}(\one,V_{i(e)} \otimes V_{i(e')} \otimes V_{i(e'')})$ is equal to the multiplicity of $V_{i(e)}^* \cong V_{i(e)}$ in the decomposition of $V_{i(e')} \otimes V_{i(e'')}$ in $\bar{\calC}$. The decomposition of $V_{i(e')} \otimes V_{i(e'')}$ into a sum of indecomposable $\bar{U}_q \fsl_2$-modules in $\calC$ is given by
 \[
  V_{i(e')} \otimes V_{i(e'')} \cong \bigoplus_{n = \frac{\lvert i(e')-i(e'') \rvert}{2}}^{\min \left\{ \frac{i(e')+i(e'')}{2},r-\frac{i(e')+i(e'')}{2}-2 \right\}} V_{2n} \oplus \bigoplus_{n = r-\frac{i(e')+i(e'')}{2}-1}^{\frac{r-1}{2}} P_{2n},
 \]
 where $P_{2n}$ is the projective cover of $V_{2n}$, see Section \ref{S:sl_2_non-sem-sim}. A similar formula for even values of $r$ is established in \cite{S92}, and the proof can be adapted. Since $P_{2n}$ is a direct summand of $V_{r-1} \otimes V_{r-1}$, its identity is negligible. In particular, the decomposition of $V_{i(e')} \otimes V_{i(e'')}$ in the category $\bar{\calC}$ reduces to
 \[
  V_{i(e')} \otimes V_{i(e'')} \cong \bigoplus_{n = \frac{\lvert i(e')-i(e'') \rvert}{2}}^{\min \left\{ \frac{i(e')+i(e'')}{2},r-\frac{i(e')+i(e'')}{2}-2 \right\}} V_{2n}.
 \]
 This proves the proposition.
\end{proof}

As an example, the state space of the torus $\Sigma_1 = S^1 \times S^1$ has dimension
\[
 \dim_\C \rmV_\calC(\Sigma_1) = \frac{r-1}{2}.
\]
Since every Dehn twist is sent to (the projective class of) a matrix of finite order, the projective representation of the genus one mapping class group $\MCG(\Sigma_1)$ in $\PGL_\C(\rmV_\calC(\Sigma_1))$ is never faithful, although it has very interesting asymptotic faithfulness properties as $r$ tends to infinity, see \cite[Thm.~1]{A02}.

\section{Lyubashenko's notion of modular category}\label{S:L-mod_cat}

An \textit{L-modular category} (L for Lyubashenko) can be defined as a finite ribbon category whose Müger center is trivial. Remark that this is not the original definition of Lyubashenko, but it is equivalent to it. This is a consequence of a result of Shimizu \cite{S16}, who compared several notions of non-degeneracy for the braiding of a finite ribbon category, proving the equivalence of four different conditions. Among these, we can find Lyubashenko's original one, as well as the short property used above. This formulation makes it clear that T-modular categories are just semisimple L-modular categories.

In order to review Lyubashenko's definition, let $\calC$ be a finite ribbon category, and let us consider the functor $\calH : \calC^\op \times \calC \to \calC$ determined by $(X,Y) \mapsto X^* \otimes Y$. We recall the classical notion of a \textit{dinatural transformation of source $\calH$}, which is given by an object $Z \in \calC$ equipped with a family $\{ d_X : X^* \otimes X \to Z \}_{X \in \calC}$ of morphisms of $\calC$ satisfying
\[
 d_X \circ (f^* \otimes \id_X) = d_Y \circ (\id_{Y*} \otimes f)
\]
for every morphism $f : X \to Y$ of $\calC$, see \cite[Sec.~IX.4]{M71}. We usually abuse notation, and simply denote dinatural transformations by their underlying objects. The \textit{coend of $\calH$} is the universal dinatural transformation of source $\calH$, which means it is an object $\calL \in \calC$ with a dinatural family of structure morphisms $i_X : X^* \otimes X \to \calL$ satisfying the following condition: for every dinatural transformation $Z$ of source $\calH$ there exists a unique morphism $\ell_Z : \calL \to Z$ such that the diagram
\begin{equation}\label{E:univ_prop_coend}
 \begin{tikzpicture}[descr/.style={fill=white},baseline=(current  bounding  box.center)]
  \node (P0) at (135:{sqrt(2)*2.5}) {$Y^* \otimes X$};
  \node (P1) at (90:2.5) {$X^* \otimes X$};
  \node (P2) at (180:2.5) {$Y^* \otimes Y$};
  \node (P3) at (0:0) {$\calL$};
  \node (P4) at (-45:{2.5/sqrt(2)}) {$Z$};
  \draw
  (P0) edge[->] node[above] {\scriptsize $f^* \otimes \id_X$} (P1)
  (P1) edge[->] node[right] {\scriptsize $i_X$} (P3)
  (P0) edge[->] node[left] {\scriptsize $\id_{Y^*} \otimes f$} (P2)
  (P2) edge[->] node[below] {\scriptsize $i_Y$} (P3)
  (P1) edge[bend left,->] node[right] {\scriptsize $d_X$} (P4)
  (P2) edge[bend right,->] node[below] {\scriptsize $d_Y$} (P4)
  (P3) edge[->] node[descr] {\scriptsize $\ell_Z$} (P4);
 \end{tikzpicture}
\end{equation}
commutes for every morphism $f : X \to Y$ of $\calC$. It has been proved by Majid \cite{M91a} and Lyubashenko \cite{L95,L99} that, since $\calC$ is a finite ribbon category, the coend of $\calH$ always exists, and furthermore it supports the structure of a braided Hopf algebra in $\calC$. This means that it admits:
\begin{enumerate}
 \item a product $\mu : \calL \otimes \calL \to \calL$ and a unit $\eta : \one \to \calL$;
 \item a coproduct $\Delta : \calL \to \calL \otimes \calL$ and a counit $\epsilon : \calL \to \one$;
 \item an antipode $S : \calL \to \calL$.
\end{enumerate}
All these structure morphisms are uniquely determined by the universal property~\eqref{E:univ_prop_coend} of $\calL$. Indeed, they can be defined as the unique morphisms of $\calC$ that satisfy
\begin{gather*}
 \pic{Hopf_algebra_structure_1} \\[5pt]
 \pic{Hopf_algebra_structure_2} \\[5pt]
 \pic{Hopf_algebra_structure_3}
\end{gather*}
for all $X,Y \in \calC$. Remark that the product $\mu$ is actually determined by a universal property satisfied by the so-called \textit{double coend} $\calL \otimes \calL$, and it requires a Fubini theorem for coends \cite[Prop.~IX.8]{M71} whose name comes from the standard notation
\[
 \calL = \int^{X \in \calC} X^* \otimes X,
\]
see also \cite[Prop.~5.1.2]{KL01}. We should point out that, dually, every finite ribbon category $\calC$ also admits an \textit{end}
\[
 \calE = \int_{X \in \calC} X \otimes X^*,
\]
which is similarly defined as a universal dinatural transformation, see Appendix \ref{A:end} for details. Equivalent constructions can be carried out exploiting the properties of $\calE$, which are completely analogous to those of $\calL$, and it is for historical reasons that we focus on $\calL$ instead of $\calE$, following Lyubashenko.

From a topological point of view, what we said so far hints at the fact that the universal property \eqref{E:univ_prop_coend} of the coend $\calL$ provides an alternative to the Reshetikhin--Turaev functor for extracting morphisms of $\calC$ from diagrams of certain framed oriented tangles. For instance, a Hopf pairing $\omega : \calL \otimes \calL \to \one$ is uniquely determined by the equality
\[
 \pic{Hopf_pairing}
\]
for all $X,Y \in \calC$. Then, following Lyubashenko's original definition, an L-modular category is a finite ribbon category for which the Hopf pairing $\omega$ is non-degenerate, meaning that it induces an isomorphism between $\calL$ and $\calL^*$. Remark how similar the defining diagram for $\omega$ is with respect to the one defining the S-matrix of Turaev. In fact, when $\calC$ is semisimple, then Prop.~\ref{P:end_fusion} and \cite[Lem.~2]{K96a} give
\[
 \calE \cong \bigoplus_{i \in I} V_i \otimes V_i^*, \qquad 
 \calL \cong \bigoplus_{i \in I} V_i^* \otimes V_i,
\]
and $\omega$ is non-degenerate if and only if the S-matrix is invertible \cite[Prop.~7.4.3]{KL01}.

An alternative condition for the modularity of $\calC$ is that a certain morphism $D : \calL \to \calE$ called the \textit{Drinfeld map} is an isomorphism, see \cite[Prop.~4.11]{FGR17}. When $\calC$ is the category $\mods{H}$ of finite-dimensional representations of a finite-dimensional ribbon Hopf algebra $H$, then let us denote with $\ad$ the \textit{adjoint representation} of $H$, which is given by the vector space $H$ equipped with the adjoint left $H$-action defined by
\[
 \ad_x(y) = x_{(1)}yS(x_{(2)})
\]
for all $x,y \in H$, where we are using Sweedler's notation $\Delta(x) = x_{(1)} \otimes x_{(2)}$ for the coproduct. Similarly, let us denote with $\coad$ the \textit{coadjoint representation} of $H$, which is given by the vector space $H^*$ equipped with the coadjoint left $H$-action defined by
\[
 \coad_x(\phi)(y) = \phi(S(x_{(1)})yx_{(2)})
\]
for all $x,y \in H$ and $\phi \in H^*$. Then Prop.~\ref{P:end_H-mod} and \cite[Lem.~3]{K96a} give
\[
 \calE \cong \ad, \qquad \calL \cong \coad.
\]
In this setting, the Drinfeld map $D : \coad \to \ad$ was first defined in \cite[Prop.~3.3]{D90} as the $H$-module morphism determined by $D(\phi) := (\phi \otimes \id_H)(R_{21}R_{12})$, where $R_{12} = R \in H \otimes H$ is the R-matrix of $H$, and $R_{21} \in H \otimes H$ is obtained from $R$ by reversing the order of its components. The element $M = R_{21}R_{12} \in H \otimes H$ is sometimes called the \textit{M-matrix}, or \textit{monodromy matrix}, of $H$. By definition, $H$ is \textit{factorizable} if $D$ is an isomorphism.

When $\calC$ is modular, an \textit{integral} of the coend $\calL$ is a morphism\footnote{The source of $\Lambda$ might be a different invertible object in general, called the \textit{object of integrals}, but here we are supposing $\calC$ is modular.} $\Lambda : \one \to \calL$ satisfying
\[
 \mu \circ (\Lambda \otimes \id_\calL) = \Lambda \circ \epsilon = \mu \circ (\id_\calL \otimes \Lambda).
\]
In analogy with Sweedler's result for Hopf algebras in the category of finite-di\-men\-sion\-al vector spaces, the coend $\calL$ admits an integral, which is unique up to scalar \cite[Thm.~6.1]{L95}. When $\calC$ is semisimple, then, up to scalar, we have
\[
 \Lambda = \left( \dim_\calC(V_i) \cdot \rcoev_{V_i} \right)_{i \in I},
\]
which means that we can think of an integral $\Lambda$ of the coend $\calL$ as extending the Kirby color to the non-semisimple case. When $\calC$ is the category $\mods{H}$ of finite-dimensional representations of a finite-dimensional factorizable ribbon Hopf algebra $H$, then, up to scalar, we have
\[
 \Lambda(1) = \lambda \in H^*,
\]
where $\lambda \in H^*$ is a classical right integral on the Hopf algebra $H$. See \cite[Sec.~2.5]{K96a} for both claims.

The algebraic setup discussed up to here is sufficient to perform Lyubashenko's construction, but in order to obtain TQFTs, we also need the theory of modified traces. Let us present the key definitions by focusing on a relevant special case. We denote by $\Proj(\calC)$ the full subcategory of projective objects of $\calC$, which forms a \textit{tensor ideal} in $\calC$, in the sense that it is closed under retracts and absorbent under tensor products, see \cite[Def.~3.1.1]{GKP10}. A \textit{trace} on $\Proj(\calC)$ is a family of linear maps
\[
 \rmt := \{ \rmt_P : \End_\calC(P) \to \Bbbk \}_{P \in \Proj(\calC)}
\]
satisfying:
\begin{enumerate}
 \item \textit{Cyclicity}: $\rmt_P \left( \pic{m-trace_cyclicity_1} \right) = \rmt_Q \left( \pic{m-trace_cyclicity_2} \right)$ \par \bigskip
  for all $P,Q \in \Proj(\calC)$, $f \in \calC(P,Q)$, and $g \in \calC(Q,P)$; 
  \bigskip
 \item \textit{Partial trace}: $\rmt_{P \otimes X} \left( \pic{m-trace_partial_trace_1} \right) = \rmt_P \left( \pic{m-trace_partial_trace_2} \right)$ \par \bigskip
  for all $P \in \Proj(\calC)$, $X \in \calC$, and $f \in \End_{\calC}(P \otimes X)$.
\end{enumerate}
We say a trace $\rmt$ on $\Proj(\calC)$ is \textit{non-degenerate} if, for all $P \in \Proj(\calC)$ and $X \in \calC$, the bilinear pairing $\rmt_P(\_ \circ \_) : \calC(X,P) \times \calC(P,X) \to \Bbbk$ determined by $(g,f) \mapsto \rmt_P(g \circ f)$ is non-degenerate. Thanks to \cite[Cor.~5.6]{GKP18}, for a modular\footnote{The statement is a particular case of a much more general existence and uniqueness result, see \cite[Thm.~5.5]{GKP18}.} category $\calC$ there exists a trace $\rmt$ on $\Proj(\calC)$, which is unique up to scalar and furthermore non-degenerate. When $\calC$ is semisimple, then $\Proj(\calC) = \calC$, and the standard categorical trace $\tr_\calC$ is the unique trace on $\calC$ up to scalar. When $\calC$ is the category $\mods{H}$ of finite-dimensional representations of a finite-dimensional factorizable\footnote{Again, hypotheses for this result can be considerably relaxed, but we prefer to avoid introducing more terminology.} ribbon Hopf algebra $H$, there is a natural correspondence between the space of traces on $\Proj(\calC)$ and the spaces of so-called \textit{symmetrized} integrals on $H$, see \cite[Thm.~1]{BBG18}.

\section{Non-semisimple quantum invariants and TQFTs}\label{S:TQFT_non-sem-sim}

Using the coend $\calL$ and the integral $\Lambda$, it is possible to develop a diagrammatic calculus which steps outside of the framework provided by the Reshetikhin--Turaev functor. This time, the topological gadgets underlying this graphical notation are called \textit{bichrome graphs}. Very roughly speaking, bichrome graphs are ribbon graphs whose edges can be of two kinds, either \textit{red} (and unlabeled) or \textit{blue} (and labeled as usual by objects of $\calC$). Similarly, coupons (which, for standard ribbon graphs, correspond to boxes like the one represented in \eqref{E:coupon}) are partitioned in two groups, according to the edges they meet, and can thus be either \textit{bichrome} (and unlabeled) or \textit{blue} (and labeled as usual by morphisms of $\calC$). These ribbon graphs are subject to certain restrictions, see \cite[Sec.~3.1]{DGGPR19} for details. An example of a bichrome graph, with red components represented as dashed-dotted lines, is given by
\[
 \pic{bichrome_graph}
\]
This can be understood as representing a morphism of $\calC$ with source $V^* \otimes U$ and target $\calL \otimes W^*$. Indeed, just like standard ribbon graphs, bichrome graphs can be arranged as morphisms of a category\footnote{Our notation for the category of bichrome graphs $\calR_\Lambda$ features the integral $\Lambda$ in order to highlight the relevant functor this category supports, but remark that the definition of $\calR_\Lambda$ itself is actually independent of the choice of $\Lambda$.} $\calR_\Lambda$, and we can interpret $\calR_\calC$ as the subcategory whose morphisms are exclusively blue bichrome graphs. Then, the idea behind bichrome graphs is essentially the following: on the one hand, the red part of the graph should be interpreted as prescribing surgery instructions, and it has to be processed through Lyubashenko's algorithm, that is, invoking the universal property \eqref{E:univ_prop_coend} of the coend $\calL$, in order to extract a morphism of $\calC$ (red minima of diagrams translate to the integral $\Lambda$ under this correspondence); on the other hand, the blue part supports the standard graphical calculus implemented by the Reshetikhin--Turaev functor. There exists a coherent way of harmonizing the two approaches, which is realized by a functor
\[
 F_\Lambda : \calR_\Lambda \to \calC
\]
restricting to $F_\calC$ on $\calR_\calC$, and called the \textit{Lyubashenko--Reshetikhin--Turaev functor}, see \cite[Sec.~3.1]{DGGPR19}.

Starting from an L-modular category $\calC$, Lyubashenko's construction can be reformulated as producing a topological invariant $\rmL_\calC$ of closed oriented 3-manifolds decorated with closed bichrome graphs. As explained in Section \ref{SS:renormalized_non-semisimple}, this invariant cannot be extended to a TQFT unless $\calC$ is semisimple, in which case everything boils down to Turaev's theory. However, in the non-semisimple case, one can construct a renormalized invariant $\rmL'_\calC$ of closed oriented 3-manifolds decorated with so-called admissible closed bichrome graphs. We say a bichrome graph is \textit{admissible} if it features at least a projective object $P \in \Proj(\calC)$ among the labels of its blue edges. In particular, just like in the semisimple case, the theory can be understood as producing a family $\{ \rmL'_P \}_{P \in \Proj(\calC)}$ of topological invariants of framed oriented links inside 3-manifolds. However, this time it is crucial not to allow empty links, and to restrict to projective objects for the parametrization.

The first step for the construction of $\rmL'_\calC$ is the definition of a renormalized invariant of admissible closed bichrome graphs. A \textit{cutting presentation} of an admissible closed bichrome graph $T$ is a bichrome graph $T_P$ satisfying
\[
 \pic{cutting_presentation} = T
\]
for some $P  \in \Proj(\calC)$. Remark that every admissible bichrome graph admits a cutting presentation, which is simply obtained by cutting open one of its blue edges carrying projective label. However, cutting presentations are by no means unique. For instance, we might have several blue edges with projective labels to choose from, and these could all determine inequivalent presentations. Nonetheless, if $T$ is an admissible closed bichrome graph, and if $T_P$ is a cutting presentation of $T$, then 
\[
 F'_\calC(T) := \rmt_P \left( F_\Lambda(T_P) \right)
\]
is a topological invariant of $T$. The proof of this result essentially follows from the defining properties of the trace $\rmt$, which were designed explicitly to obtain these kinds of \textit{renormalized} topological invariants. Now, in order to extend this construction to 3-manifolds, we need again a normalization coefficient, just like in the semisimple theory. This is defined in terms of the so-called \textit{stabilization coefficients}
\[
 \Delta_+ := F_\Lambda \left( \pic{stabilization_coeff_+_red} \right) \in \Bbbk, \qquad
 \Delta_- := F_\Lambda \left( \pic{stabilization_coeff_-_red} \right) \in \Bbbk.
\]
Just like in the fusion case, it is a consequence of L-modularity that the product $\Delta_+ \Delta_-$ is invertible, and so we can choose two parameters $\calD,\delta \in \Bbbk^\times$ satisfying
\[
 \calD^2 = \Delta_+ \Delta_-, \qquad 
 \delta = \frac{\Delta_+}{\calD} = \frac{\calD}{\Delta_-}.
\]
The definition of the topological invariant $\rmL'_\calC$ depends again on the choice of $\calD$, which uniquely determines $\delta$. If $M$ is a closed oriented 3-manifold, if $T \subset M$ is an admissible closed bichrome graph, and if $L \subset S^3$ is a surgery presentation of $M$, which we assume having $\ell$ components and signature $\sigma$, and to which we assign an arbitrary orientation and the color red, then
\[
 \rmL'_\calC(M,T) := \calD^{-1-\ell} \delta^{-\sigma} F'_\Lambda(L \cup T)
\]
is a topological invariant of the pair $(M,T)$.

Just like in Section \ref{S:TQFT_sem-sim}, the universal construction of \cite{BHMV95} extends the invariant $\rmL'_\calC$ to a dual pair of TQFTs given by a covariant functor $\rmV_\calC$ and a contravariant one $\rmV'_\calC$.  Again, this means that, for every closed oriented surface $\Sigma$, we obtain dual vector spaces $\rmV_\calC(\Sigma)$ and $\rmV'_\calC(\Sigma)$ supporting respectively left and right actions of decorated cobordisms. This time, if $\bar{\Sigma}$ is obtained from $\Sigma$ by reversing its orientation, the isomorphism $\rmV'_\calC(\Sigma) \cong \rmV_\calC(\bar{\Sigma})$ is no longer natural, as a consequence of the admissibility condition. Indeed, if $\Sigma_g$ is a closed oriented surface of genus $g$, then the contravariant state space $\rmV'_\calC(\Sigma_g)$ can be naturally identified with
\begin{equation}\label{E:st_sp_non-sem-sim_prime}
 \rmV_\calC'(\Sigma_g) \cong \calC(\calL^{\otimes g},\one).
\end{equation}
On the other hand, the covariant state space $\rmV_\calC(\Sigma_g)$, which is isomorphic to the linear dual of $\rmV'_\calC(\Sigma_g)$, can be naturally identified with 
\begin{equation}\label{E:st_sp_non-sem-sim}
 \rmV_\calC(\Sigma_g) \cong \calC(P_{\one},\calE^{\otimes g}) / \rad_R \langle \_,\_ \rangle_g,
\end{equation}
where $P_{\one}$ is the projective cover of the tensor unit $\one$, and where the bilinear pairing $\langle \_,\_ \rangle_g : \calC(\calL^{\otimes g},\one) \times \calC(P_{\one},\calE^{\otimes g}) \to \Bbbk$ is determined by 
\[
 \langle f',f \rangle_g = \rmt_{P_{\one}} \left( \eta_{\one} \circ f' \circ (D^{-1})^{\otimes g} \circ f \right)
\]
for the Drinfeld map $D : \calL \to \calE$ and the injective envelope\footnote{Projective covers coincide with injective envelopes in modular categories.} monomorphism $\eta_{\one} : \one \to P_{\one}$. When $\calC$ is semisimple, then $P_{\one} = \one$, $\eta_{\one} = \id_{\one}$, and $\langle \_,\_ \rangle_g$ is non-degenerate, so that we recover the description of \eqref{E:st_sp_sem-sim}.

\section{L-modular categories from small quantum \texorpdfstring{$\mathfrak{sl}_2$}{sl(2)}}\label{S:sl_2_non-sem-sim}

The small quantum group $\bar{U}_q \fsl_2$ introduced in Section \ref{S:sl_2_sem-sim} is factorizable. This is proved in \cite[Ex.~3.4.3]{M95} by looking directly at the M-matrix $M \in \bar{U}_q \fsl_2 \otimes \bar{U}_q \fsl_2$, which is given by
\[
 M := \frac{1}{r} \sum_{a,b,c,d=0}^{r-1} \frac{\{ 1 \}^{a+b}}{[a]![b]!} q^{\frac{a(a-1)+b(b-1)}{2} - 2 cd - (b+c)(b-d)} F^b K^c E^a \otimes E^b K^d F^a.
\]
This means the category $\calC = \mods{\bar{U}_q \fsl_2}$ of finite-dimensional left $\bar{U}_q \fsl_2$-modules is L-modular, see \cite[Rem.~3.6.1]{L95}.

An \textit{integral} $\Lambda : \C \to \coad$ is determined by the linear form $\lambda = \Lambda(1) \in (\bar{U}_q \fsl_2)^*$ defined as
\[
 \lambda \left( E^a F^b K^c \right) := \frac{r^3}{\{ 1 \}^{2r-2}} \delta_{a,r-1} \delta_{b,r-1} \delta_{c,1},
\]
see \cite[Prop.~A.5.1]{L94}. Remark that Lyubashenko uses Lusztig's coproduct, which explains why the formula written here defines a right integral, instead of a left one. A direct computation gives the \textit{stabilization parameters}
\begin{align*}
 \Delta_- &= \lambda(v) = i^{\frac{r-1}{2}} r^{\frac 32} q^{\frac{r+3}{2}}, &
 \Delta_+ &= \lambda(v^{-1}) = i^{-\frac{r-1}{2}} r^{\frac 32} q^{\frac{r-3}{2}},
\end{align*}
and we fix the square root
\[
 \calD = r^{\frac 32}
\]
of the product $\Delta_+ \Delta_-$, which uniquely determines the coefficient
\[
 \delta = i^{-\frac{r-1}{2}} q^{\frac{r-3}{2}},
\]
as well as the corresponding normalization of $\rmL'_\calC$. 

In order to illustrate the difference between the L-modular category $\calC$ and its T-modular subquotient $\bar{\calC}$ defined in Section \ref{S:sl_2_sem-sim}, let us describe projective covers of simple objects of $\calC$. For every integer $0 \leqs n \leqs r-2$ we denote with $P_n$ the $\bar{U}_q \fsl_2$-module with basis 
\[
 \{ a_i^n, x_j^n, y_j^n, b_i^n \in P_n \mid 0 \leqs i \leqs n, 0 \leqs j \leqs r-n-2 \}
\] 
and action given, for all integers $0 \leqs i \leqs n$ and $0 \leqs j \leqs r-n-2$, by
\begin{equation*}
 \begin{split}
  K \cdot a_i^n &= q^{n-2i} a_i^n, \\
  E \cdot a_i^n &= [i][n-i+1] a_{i-1}^n, \\
  F \cdot a_i^n &= a_{i+1}^n, \\
  K \cdot x_j^n &= q^{-n-2j-2} x_j^n, \\
  E \cdot x_j^n &= -[j][n+j+1] x_{j-1}^n, \\ 
  F \cdot x_j^n &= 
  \begin{cases}
   x_{j+1}^n & 0 \leqs j < r-n-2, \\
   a_0^n & j = r-n-2,
  \end{cases} \\
  K \cdot y_j^n &= q^{-n-2j-2} y_j^n, \\
  E \cdot y_j^n &= 
  \begin{cases}
   a_n^n & j = 0, \\
   -[j][n+j+1] y_{j-1}^n & 0 < j \leqs r-n-2,
  \end{cases} \\
  F \cdot y_j^n &= y_{j+1}^n, \\
  K \cdot b_i^n &= q^{n-2i} b_i^n, \\
  E \cdot b_i^n &= 
  \begin{cases}
   x_{r-n-2}^n & i = 0, \\
   a_{i-1}^n + [i][n-i+1] b_{i-1}^n & 0 < i \leqs n, 
  \end{cases} \\
  F \cdot b_i^n &= 
  \begin{cases}
   b_{i+1}^n & 0 \leqs i < n, \\
   y_0^n & i = n,
  \end{cases}
 \end{split}
\end{equation*}
where $a_{-1}^n := a_{n+1}^n := x_{-1}^n := y_{r-n-1}^n := 0$. These modules are all projective, indecomposable, and self-dual, and every indecomposable projective object of $\calC$ is isomorphic to either $P_{r-1} = V_{r-1}$ or $P_n$ for some $0 \leqs n \leqs r-2$. Their categorical dimension is
\[
 \dim_\calC(P_n) = \tr_\calC(\id_{P_n}) = 0.
\]
However, the normalization $\rmt_{V_{r-1}}(\id_{V_{r-1}}) = 1$ uniquely determines a trace $\rmt$ on $\Proj(\calC)$ which satisfies
\[
 \rmt_{P_n}(\id_{P_n}) = \{ n+1 \}',
\]
as follows from a computation that can be adapted from \cite[Lem.~6.9]{CGP14} (remark the difference between the pivotal element $g = K$ and the one used by Costantino, Geer, and Patureau for their right duality morphisms, which accounts for the sign appearing in their formula).

Thanks to equations \eqref{E:st_sp_non-sem-sim_prime} and \eqref{E:st_sp_non-sem-sim}, state spaces of closed oriented surfaces for the TQFTs $\rmV_{\calC}$ and $\rmV'_{\calC}$ extending $\rmL'_{\calC}$ can be written in terms of morphism spaces in $\calC$ involving tensor powers of the end $\calE \cong \ad$ and of the coend $\calL \cong \coad$. We point out that, although $\ad$ and $\coad$ can be described very explicitly, their description is rather complicated. For instance, they are not projective objects of $\calC$, as follows from an explicit decomposition into indecomposable summands computed by Ostrik \cite{Os95}. If $\Sigma_g$ is a closed oriented surface of genus $g$, then equation \eqref{E:st_sp_non-sem-sim_prime} identifies $\rmV'_{\calC}(\Sigma_g)$ with $ \calC(\coad^{\otimes g},\one)$. On the other hand, equation \eqref{E:st_sp_non-sem-sim} describes the dual vector space $\rmV_{\calC}(\Sigma_g)$ as a quotient of $\calC(P_0,\ad^{\otimes g})$. In the example of the torus $\Sigma_1 = S^1 \times S^1$, since $\calC(\coad,\one)$ can be naturally identified with the center $Z(\bar{U}_q \fsl_2)$ of $\bar{U}_q \fsl_2$, then
\[
 \rmV'_{\calC}(\Sigma_1) \cong Z(\bar{U}_q \fsl_2).
\]
On the other hand, the quotient of $\calC(P_0,\ad^{\otimes g})$ can be naturally identified with the Hochschild homology $\HH_0(\bar{U}_q \fsl_2) = \bar{U}_q \fsl_2/[\bar{U}_q \fsl_2,\bar{U}_q \fsl_2]$ of $\bar{U}_q \fsl_2$, so that
\[
 \rmV_{\calC}(\Sigma_1) \cong \HH_0(\bar{U}_q \fsl_2).
\]
The dimension of both spaces is
\[
 \dim_\C \rmV_\calC(\Sigma_1) = \dim_\C Z(\bar{U}_q \fsl_2) = \frac{3r-1}{2},
\]
as computed by Kerler in \cite[Lem.~14]{K94}. In the same paper, Kerler independently redefines and computes Lyubashenko's projective representation of the genus one mapping class group $\MCG(\Sigma_1)$ in $\PGL_\C(Z(\bar{U}_q \fsl_2))$, see \cite[Thm.~2]{K94}. He conjectures an explicit decomposition of this representation, and he verifies his conjecture for $r=3$ and $r=5$ in \cite[Sec.~3.6]{K94}. Remark that in both cases the resulting representation of $\MCG(\Sigma_1)$ is faithful modulo the center $Z(\MCG(\Sigma_1)) \cong \Z/2\Z$. Up to isomorphism, Kerler--Lyubashenko's representations are precisely those induced by the contravairant TQFT $\rmV'_\calC$ for $\calC = \mods{\bar{U}_q \fsl_2}$, see \cite[Thm.~1.2]{DGGPR20}.

\appendix

\section{Ends}\label{A:end}

Let us consider the functor $\calK : \calC \times \calC^\op \to \calC$ determined by $(X,Y) \mapsto X \otimes Y^*$. A \textit{dinatural transformation of target $\calK$} is given by an object $U \in \calC$ equipped with a family $\{ d_X : U \to X \otimes X^* \}_{X \in \calC}$ of morphisms of $\calC$ satisfying
\[
 (f \otimes \id_{X^*}) \circ d_X  = (\id_Y \otimes f^*) \circ d_Y
\]
for every morphism $f : X \to Y$ of $\calC$, see \cite[Sec.~IX.4]{M71}. The \textit{end of $\calK$} is the universal dinatural transformation of target $\calK$, which means it is an object $\calE \in \calC$ with a dinatural family of structure morphisms $j_X : \calE \to X \otimes X^*$ satisfying the following condition: for every dinatural transformation $U$ of target $\calK$ there exists a unique morphism $e_U : U \to \calE$ such that the diagram
\begin{equation}\label{E:univ_prop_end}
 \begin{tikzpicture}[descr/.style={fill=white},baseline=(current  bounding  box.center)]
  \node (P0) at (-45:{sqrt(2)*2.5}) {$Y \otimes X^*$};
  \node (P1) at (0:2.5) {$X \otimes X^*$};
  \node (P2) at (-90:2.5) {$Y \otimes Y^*$};
  \node (P3) at (0:0) {$\calE$};
  \node (P4) at (135:{2.5/sqrt(2)}) {$U$};
  \draw
  (P1) edge[->] node[right] {\scriptsize $f \otimes \id_{X^*}$} (P0)
  (P3) edge[->] node[above] {\scriptsize $j_X$} (P1)
  (P2) edge[->] node[below] {\scriptsize $\id_Y \otimes f^*$} (P0)
  (P3) edge[->] node[left] {\scriptsize $j_Y$} (P2)
  (P4) edge[bend left,->] node[above] {\scriptsize $d_X$} (P1)
  (P4) edge[bend right,->] node[left] {\scriptsize $d_Y$} (P2)
  (P4) edge[->] node[descr] {\scriptsize $e_U$} (P3);
 \end{tikzpicture}
\end{equation}
commutes for every morphism $f : X \to Y$ of $\calC$.

\begin{proposition}\label{P:end_fusion}
 If $\calC$ is a rigid fusion category and $\Gamma(\calC) = \left\{ V_i \mid i \in I \right\}$ is a set of representatives of isomorphism classes of simple objects of $\calC$, then
 \[
  \calE \cong \bigoplus_{i \in I} V_i \otimes V_i^*,
 \]
 and for every $X \in \calC$ the structure morphism $j_X : \calE \to X \otimes X^*$ is given by
 \[
  j_X = \left( \sum_{a=1}^{n(X,i)} \iota_a \otimes (\pi^a)^* \right)_{i \in I}
 \]
 where $\{ \iota_a \in \calC(V_i,X) \mid 1 \leqs a \leqs n(X,i) \}$ and $\{ \pi^a \in \calC(X,V_i) \mid 1 \leqs a \leqs n(X,i) \}$ are dual bases with respect to composition. Furthermore, if $U \in \calC$ is a dinatural transformation of target $\calK$ with structure morphisms $d_X : U \to X \otimes X^*$, then the unique morphism $e_U : U \to \calE$ making \eqref{E:univ_prop_end} into a commutative diagram is given by
 \[
  e_U := \left( d_{V_i} \right)_{i \in I}.
 \]
\end{proposition}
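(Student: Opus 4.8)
The plan is to verify directly that the object $\bigoplus_{i \in I} V_i \otimes V_i^*$, equipped with the proposed family $\{j_X\}$, is a \emph{universal} dinatural transformation of target $\calK$. Since $\calC$ is fusion, every object $X$ splits as a sum of copies of the $V_i$, and I will encode this decomposition entirely through the dual bases $\{\iota_a\}$ and $\{\pi^a\}$, which by hypothesis satisfy the duality relation $\pi^a \circ \iota_b = \delta^a_b \id_{V_i}$ together with the completeness relation $\sum_{i,a} \iota_a \circ \pi^a = \id_X$. The only further categorical inputs are the contravariant functoriality of duals, $(g \circ f)^* = f^* \circ g^*$, and the bifunctoriality (interchange law) of $\otimes$.

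First I would check that $\bigl(\bigoplus_i V_i \otimes V_i^*, j\bigr)$ really is a dinatural transformation, i.e. that $(f \otimes \id_{X^*}) \circ j_X = (\id_Y \otimes f^*) \circ j_Y$ for every $f : X \to Y$. Working one summand $V_i \otimes V_i^*$ at a time, the left side becomes $\sum_a (f \circ \iota_a) \otimes (\pi^a)^*$ and the right side becomes $\sum_b \iota_b \otimes (\pi^b \circ f)^*$. Expanding $f \circ \iota_a$ in the basis $\{\iota_b\}$ yields scalars $c_{ab} = \pi^b \circ f \circ \iota_a \in \End_\calC(V_i) \cong \Bbbk$, and expanding $\pi^b \circ f$ in the basis $\{\pi^a\}$ yields exactly the same scalars; both sides then equal $\sum_{a,b} c_{ab}\, \iota_b \otimes (\pi^a)^*$, so dinaturality holds.

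Next I would verify the universal property. Given any dinatural transformation $U$ with structure maps $d_X$, I set $e_U := (d_{V_i})_{i \in I}$, using that a morphism into a direct sum is the same as the tuple of its components. The required identity $j_X \circ e_U = d_X$ is the crux: rewriting each summand as $\iota_a \otimes (\pi^a)^* = (\id_X \otimes (\pi^a)^*) \circ (\iota_a \otimes \id_{V_i^*})$ and applying dinaturality of $U$ to the morphism $\iota_a : V_i \to X$ turns $(\iota_a \otimes \id_{V_i^*}) \circ d_{V_i}$ into $(\id_X \otimes \iota_a^*) \circ d_X$. Then $(\pi^a)^* \circ \iota_a^* = (\iota_a \circ \pi^a)^*$, and summing over all $i$ and $a$ lets the completeness relation collapse the total to $(\id_X \otimes \id_{X^*}) \circ d_X = d_X$.

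Uniqueness is then immediate: specializing to $X = V_i$, the dual bases reduce to the single pair $\iota = \pi = \id_{V_i}$, so $j_{V_i}$ is precisely the projection onto the $i$-th summand; the constraint $j_{V_i} \circ e_U = d_{V_i}$ therefore forces the $i$-th component of $e_U$, giving at most one choice. The main obstacle is purely bookkeeping---keeping the conventions for dual morphisms and the interchange law consistent so that the two appearances of dinaturality (for $j$ itself, and for the universal property) telescope correctly against the completeness relation $\sum_{i,a} \iota_a \circ \pi^a = \id_X$; no deeper structural input beyond semisimplicity and rigidity is required.
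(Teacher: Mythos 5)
Your proposal is correct and follows essentially the same route as the paper's proof: you verify dinaturality of the proposed family by comparing the matrix coefficients $\pi^b \circ f \circ \iota_a$ in dual bases, you establish $j_X \circ e_U = d_X$ by playing dinaturality of $d$ against the completeness relation $\sum_{i,a} \iota_a \circ \pi^a = \id_X$ (you apply dinaturality to $\iota_a : V_i \to X$ where the paper applies it to $\pi^a : X \to V_i$, a mirror image of the same step), and you obtain uniqueness by evaluating at the simple objects, exactly as in the paper. The one point the paper addresses that you leave implicit is well-definedness---the formula for $j_X$ must be shown independent of the choice of dual bases, which the paper handles by an opening change-of-basis computation---though this also follows from your own uniqueness step, since any two choices give dinatural families agreeing at each $V_i$ and hence coincide by universality.
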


\begin{proof}
 The fact that $j_X$ is well-defined is a direct consequence of a standard linear algebra argument. Indeed, for any other pair of dual bases
 \[
  \{ \kappa_a \in \calC(V_i,X) \mid 1 \leqs a \leqs n(X,i) \}, \qquad
  \{ \rho^a \in \calC(X,V_i) \mid 1 \leqs a \leqs n(X,i) \}
 \]
 there exist scalars $m^a_b \in \Bbbk$ for $1 \leqs a,b \leqs n(X,i)$ satisfying
 \[
  \kappa_b = \sum_{a=1}^{n(X,i)} m^a_b \iota_a, \qquad
  \pi^a = \sum_{b=1}^{n(X,i)} m^a_b \rho^b,
 \]
 which means
 \[
  \sum_{b=1}^{n(X,i)} \kappa_b \otimes (\rho^b)^* = \sum_{a,b=1}^{n(X,i)} m^a_b \iota_a \otimes (\rho^b)^* = \sum_{a=1}^{n(X,i)} \iota_a \otimes (\pi^a)^*.
 \]

 Next, we claim that
 \[
  (f \otimes \id_{X^*}) \circ j_X = (\id_Y \otimes f^*) \circ j_Y
 \]
 for every morphism $f : X \to Y$. Indeed, if
 \begin{align*}
  &\{ \iota_a \in \calC(V_i,X) \mid 1 \leqs a \leqs n(X,i) \}, &
  &\{ \pi^a \in \calC(X,V_i) \mid 1 \leqs a \leqs n(X,i) \}, \\*
  &\{ \kappa_a \in \calC(V_i,Y) \mid 1 \leqs a \leqs n(Y,i) \}, &
  &\{ \rho^a \in \calC(Y,V_i) \mid 1 \leqs a \leqs n(Y,i) \}
 \end{align*}
 are dual bases for every for $i \in I$, there exist scalars $(f_i)^a_b \in \Bbbk$ for all $1 \leqs a \leqs n(Y,i)$ and $1 \leqs b \leqs n(X,i)$ satisfying
 \[
  f = \sum_{i \in I} f_i, \qquad
  f_i = \sum_{a=1}^{n(Y,i)} \sum_{b=1}^{n(X,i)} (f_i)^a_b \kappa_a \circ \pi^b.
 \]
 Then, if $(j_X)_i : V_i \otimes V_i^* \to X \otimes X^*$ denotes the $i$th component of $j_X$ for every $X \in \calC$ and $i \in I$, we have
 \begin{align*}
  (f \otimes \id_{X^*}) \circ (j_X)_i 
  &= (f_i \otimes \id_{X^*}) \circ (j_X)_i 
  = \sum_{a=1}^{n(X,i)} \left( (f \circ \iota_a) \otimes (\pi^a)^* \right) \\*
  &= \sum_{a=1}^{n(X,i)} \sum_{b=1}^{n(Y,i)} \sum_{c=1}^{n(X,i)} (f_i)^b_c \left( (\kappa_b \circ \pi^c \circ \iota_a) \otimes (\pi^a)^* \right) \\*
  &= \sum_{b=1}^{n(Y,i)} \sum_{c=1}^{n(X,i)} (f_i)^b_c \left( \kappa_b \otimes (\pi^c)^* \right) \\*
  &= \sum_{a=1}^{n(Y,i)} \sum_{b=1}^{n(Y,i)} \sum_{c=1}^{n(X,i)} (f_i)^b_c \left( \kappa_a \otimes ((\pi^c)^* \circ \kappa_b^* \circ (\rho^a)^*) \right) \\*
  &= \sum_{a=1}^{n(Y,i)} \left( \kappa_a \otimes (f_i^* \circ (\rho^a)^*) \right) 
  = (\id_Y \otimes f_i^*) \circ (j_Y)_i \\*
  &= (\id_Y \otimes f^*) \circ (j_Y)_i.
 \end{align*}
 
 Next, we claim that
 \[
  j_X \circ e_U = d_X
 \]
 for every object $X \in \calC$. Indeed, we have
 \begin{align*}
  j_X \circ e_U = \sum_{i \in I} \sum_{a=1}^{x_i} \left( \iota_a \otimes (\pi^a)^* \right) \circ d_{V_i} &= \sum_{i \in I} \sum_{a=1}^{x_i} \left( (\iota_a \circ \pi^a) \otimes \id_{X^*} \right) \circ d_X 
  = d_X,
 \end{align*}
 where the second equality follows from the fact that $\{ d_X : U \to X \otimes X^* \}_{X \in \calC}$ is a dinatural family.
 
 Finally, we claim that $e_U$ is unique. Indeed, consider morphisms $f_i : U \to V_i \otimes V_i^*$ for $i \in I$ satisfying
 \[
  \sum_{i \in I} (j_X)_i \circ f_i = d_X
 \]
 for every object $X \in \calC$. Since $(j_{V_{i_0}})_i = \delta_{i,i_0} \id_{V_{i_0} \otimes V_{i_0}^*}$ for any fixed $i_0 \in I$, we have
 \[
  f_{i_0} = \sum_{i \in I} (j_{V_{i_0}})_i \circ f_i = d_{V_{i_0}}. \qedhere
 \]
\end{proof}

\begin{proposition}\label{P:end_H-mod}
 If $\calC = \mods{H}$ is the category of finite-dimensional representations of a finite-dimensional Hopf algebra $H$, then
 \[
  \calE \cong \ad,
 \]
 and for every $X \in \calC$ the structure morphism $j_X : \ad \to X \otimes X^*$ is given by
 \[
  j_X(x) = \sum_{a=1}^n (x \cdot v_a) \otimes \phi^a,
 \]
 where $\{ v_a \in X | 1 \leqs a \leqs n \}$ and $\{ \phi^a \in X^* | 1 \leqs a \leqs n \}$ are dual bases. Furthermore, if $U \in \calC$ is a dinatural transformation of target $\calK$ with structure morphisms $d_X : U \to X \otimes X^*$, then the unique $H$-module morphism $e_U : U \to \calE$ making \eqref{E:univ_prop_end} into a commutative diagram is given by
 \[
  e_U := (\id_H \otimes \eta^*) \circ d_H,
 \]
 where $H$ denotes the regular representation and $\eta : \Bbbk \to H$ denotes the unit of $H$. 
\end{proposition}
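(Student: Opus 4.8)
The plan is to prove that $\ad$, equipped with the family $\{j_X\}_{X \in \calC}$ described in the statement, satisfies the universal property of the end of $\calK$; the isomorphism $\calE \cong \ad$ then follows from the uniqueness of the end. I would organize the argument in exact parallel with the proof of Proposition~\ref{P:end_fusion}: first check that each $j_X$ is a morphism of $\calC$, then verify the dinaturality of the family, and finally establish existence and uniqueness of the factorization $e_U$.

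For the first step, I would show that $j_X \colon \ad \to X \otimes X^*$, $x \mapsto \sum_a (x \cdot v_a) \otimes \phi^a$, is $H$-linear, where $\ad$ carries the adjoint action and $X \otimes X^*$ the tensor-product action with dual action $(h \cdot \phi)(v) = \phi(S(h) v)$. The computation reduces to expanding $h \cdot j_X(x) = \sum_a (h_{(1)} x \cdot v_a) \otimes (h_{(2)} \cdot \phi^a)$, rewriting $h_{(2)} \cdot \phi^a = \sum_b \phi^a(S(h_{(2)}) v_b)\, \phi^b$, and using the completeness relation $\sum_a \phi^a(w)\, v_a = w$ to collapse the sum into $\sum_b (h_{(1)} x S(h_{(2)}) \cdot v_b) \otimes \phi^b = j_X(\ad_h(x))$; equivalently one invokes the $H$-invariance of the coevaluation element $\sum_a v_a \otimes \phi^a$. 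Next, for the dinaturality condition $(f \otimes \id_{X^*}) \circ j_X = (\id_Y \otimes f^*) \circ j_Y$ with $f \colon X \to Y$, I would run a short dual-basis computation: expanding $f(v_a)$ in a basis of $Y$ and recognizing the resulting functional $\sum_a \psi^b(f(v_a))\,\phi^a$ as $f^*(\psi^b)$. This step is purely formal once $H$-linearity of $f$ is used to move the action through $f$, and I do not expect difficulty here.

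The crux, and the main obstacle, is the universal property. Given a dinatural transformation $U$ with structure maps $d_X \colon U \to X \otimes X^*$, I would define $e_U = (\id_H \otimes \eta^*) \circ d_H$ as a \emph{linear} map, where $H$ is the left regular representation and $\eta^* \colon H^* \to \Bbbk$ evaluates at $1_H$, and then prove $j_X \circ e_U = d_X$ for every $X$. The key device is to feed into the dinaturality of $\{d_X\}$ the evaluation morphism $a_v \colon H \to X$, $h \mapsto h \cdot v$ (which is $H$-linear from the regular representation), obtaining $(a_v \otimes \id_{H^*}) \circ d_H = (\id_X \otimes a_v^*) \circ d_X$; contracting the $H^*$-factor of both sides with $\eta^*$, and noting $\eta^* \circ a_v^* = \ev_v$, then yields $e_U(u) \cdot v = (\id_X \otimes \ev_v)(d_X(u))$ for all $v \in X$. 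Substituting $v = v_a$ and resumming against $\phi^a$ recovers $d_X(u) = j_X(e_U(u))$ via the completeness relation.

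The subtle point I want to flag is that $\eta^*$ is \emph{not} itself a morphism $H^* \to \Bbbk$ (the unit $\eta$ is not a map of trivial-to-regular representations, since $\varepsilon(h)\,1_H \neq h$ in general), so the $H$-linearity of $e_U$ is not automatic from its definition. I would instead deduce it \emph{a posteriori}: the identity $(\id_H \otimes \eta^*) \circ j_H = \id_{\ad}$ shows that $j_H$ is a split monomorphism, and then $H$-linearity of $e_U$ follows from $d_H = j_H \circ e_U$ being $H$-linear together with the $H$-linearity and injectivity of $j_H$. The same injectivity of $j_H$ forces uniqueness of $e_U$, completing the verification of the universal property and hence the identification $\calE \cong \ad$.
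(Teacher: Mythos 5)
Your proposal is correct, and it coincides with the paper's proof in most of its skeleton: the same dual-basis verification that $j_X$ is an intertwiner (via $(x\cdot\phi)(v)=\phi(S(x)\cdot v)$, equivalently the invariance of the coevaluation element), the same dinaturality check for $\{j_X\}$, the same formula $e_U=(\id_H\otimes\eta^*)\circ d_H$, the same key device of feeding the evaluation morphisms $a_v : H \to X$, $h \mapsto h\cdot v$ into the dinaturality of $\{d_X\}$ to get $j_X\circ e_U=d_X$, and the same identity $(\id_H\otimes\eta^*)\circ j_H=\id_{\ad}$ for uniqueness. Where you genuinely depart from the paper is the $H$-linearity of $e_U$ --- the subtlety you correctly flag, since $\eta$ is not a morphism from the trivial to the regular representation. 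The paper proves equivariance head-on: writing $d_H(u)=\sum_a d_H(u)_a\otimes d_H(u)^a$, it combines the intertwiner property of $d_H$ with dinaturality applied to right-multiplication maps $y\mapsto yS(x_{(2)})$ (which are morphisms of the left regular representation) to convert $\sum_a d_H(u)^a(1)\,x_{(1)}d_H(u)_a S(x_{(2)})$ into $\ad_x(e_U(u))$ --- a genuinely Hopf-algebraic Sweedler computation. You instead reverse the order of the steps: you establish the factorization $j_X\circ e_U=d_X$ first, treating $e_U$ as merely linear (legitimate, since that computation nowhere uses equivariance of $e_U$), and then deduce $H$-linearity a posteriori from $d_H=j_H\circ e_U$ together with the $H$-linearity of $j_H$ and its injectivity, which follows from the splitting $(\id_H\otimes\eta^*)\circ j_H=\id_{\ad}$; note that the retraction itself need not be $H$-linear, and your argument only uses injectivity of $j_H$ as a linear map, so this is sound. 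Your route is shorter and more structural --- it isolates the single fact that the regular representation ``detects'' the end through a split mono, which simultaneously yields equivariance and uniqueness, and it would transport to analogous settings with little change --- while the paper's direct computation makes explicit how dinaturality against right multiplications encodes the adjoint action, at the cost of more coproduct bookkeeping. The only (minor) omission on your side is the well-definedness of $j_X$ independently of the chosen dual bases, which the paper records as a one-line linear-algebra remark; your appeal to the canonical coevaluation element covers the same point implicitly.
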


\begin{proof}
 The fact that $j_X$ is well-defined is a direct consequence of a standard linear algebra argument. Indeed, for any other pair of dual bases
 \[
  \{ w_a \in X | 1 \leqs a \leqs n \}, \qquad 
  \{ \psi^a \in X^* | 1 \leqs a \leqs n \}
 \]
 there exist scalars $m^a_b \in \Bbbk$ for $1 \leqs a,b \leqs n$ satisfying
 \[
  w_b = \sum_{a=1}^n m^a_b v_a, \qquad
  \phi^a = \sum_{b=1}^n m^a_b \psi^b,
 \]
 which means
 \[
  \sum_{b=1}^n (x \cdot w_b) \otimes \psi^b = \sum_{a,b=1}^n m^a_b (x \cdot v_a) \otimes \psi^b = \sum_{a=1}^n (x \cdot v_a) \otimes \phi^a
 \]
 for every $x \in H$. 
 
 The fact that $j_X$ is an intertwiner follows from
 \begin{align*}
  j_X(\ad_x(y)) &= \sum_{a=1}^n (x_{(1)}yS(x_{(2)}) \cdot v_a) \otimes \phi^a 
  = \sum_{a=1}^n (x_{(1)}y \cdot v_a) \otimes (x_{(2)} \cdot \phi^a) 
  = x \cdot j_X(y)
 \end{align*}
 for all $x,y \in H$, where the second equality follows from the fact that
 \[
  (x \cdot \phi)(v) = \phi(S(x) \cdot v)
 \]
 for all $x \in H$, $v \in X$, and $\phi \in X^*$.
  
 Next, we claim that 
 \[
  (f \otimes \id_{X^*}) \circ j_X  = (\id_Y \otimes f^*) \circ j_Y
 \]
 for every $H$-module morphism $f : X \to Y$. Indeed, for every $x \in H$ we have
 \begin{align*}
  (f \otimes \id_{X^*})(j_X(x)) &= \sum_{a=1}^n f(x \cdot v_a) \otimes \phi^a 
  = \sum_{a=1}^n x \cdot f(v_a) \otimes \phi^a 
  = \sum_{a=1}^n x \cdot v_a \otimes f^*(\phi^a) \\*
  &= (\id_X \otimes f^*)(j_X(x)).
 \end{align*}

 Next, we claim that $e_U$ is an intertwiner. Indeed, if for every $u \in U$ we adopt the notation
 \[
  d_H(u) = \sum_{a=1}^k d_H(u)_a \otimes d_H(u)^a
 \]
 with $d_H(u)_a \in H$ and $d_H(u)^a \in H^*$ for every $1 \leqs a \leqs k$, then
 \[
  e_U(u) = \sum_{a=1}^k d_H(u)^a(1) d_H(u)_a. 
 \]
 This means
 \begin{align*}
  e_U(x \cdot u) &= \sum_{a=1}^k d_H(x \cdot u)^a(1) d_H(x \cdot u)_a 
  = \sum_{a=1}^k d_H(u)^a(S(x_{(2)})1) x_{(1)}d_H(u)_a \\*
  &= \sum_{a=1}^k d_H(u)^a(1S(x_{(2)})) x_{(1)}d_H(u)_a 
  = \sum_{a=1}^k d_H(u)^a(1) x_{(1)}d_H(u)_aS(x_{(2)}) \\*
  &= \sum_{a=1}^k d_H(u)^a(1) \ad_x(d_H(u)_a) 
  = \ad_x(e_U(u))
 \end{align*}
 for every $x \in H$, where the second equality follows from the fact that $d_H$ is an intertwiner, and where the fourth equality follows from the fact that it is dinatural.
 
 Next, we claim that
 \[
  j_X \circ e_U = d_X
 \]
 for every $H$-module $X$. Indeed, if for every $u \in U$ we adopt the notation
 \[
  d_X(u) = \sum_{a=1}^\ell d_X(u)_a \otimes d_X(u)^a
 \]
 with $d_X(u)_a \in X$ and $d_X(u)^a \in X^*$ for every $1 \leqs a \leqs \ell$, then
 \begin{align*}
  j_X(e_U(u)) &= \sum_{a=1}^k \sum_{b=1}^n d_H(u)^a(1) (d_H(u)_a \cdot v_b) \otimes \phi^b \\*
  &= \sum_{a=1}^\ell \sum_{b=1}^n d_X(u)^a(1 \cdot v_b) d_X(u)_a \otimes \phi^b 
  = \sum_{a=1}^\ell \sum_{b=1}^n d_X(u)^a(v_b) d_X(u)_a \otimes \phi^b \\*
  &= \sum_{a=1}^\ell d_X(u)_a \otimes d_X(u)^a 
  = d_X(u),
 \end{align*}
 where the second equality follows from the fact that $\{ d_X : U \to X \otimes X^* \}_{X \in \calC}$ is a dinatural family, and that $x \mapsto x \cdot v$ defines an $H$-module morphism from $H$ to $X$ for every $v \in X$.
 
 Finally, we claim that $e_U$ is unique. Indeed, consider an $H$-module morphism $f : U \to \ad$ satisfying
 \[
  j_X \circ f = d_X
 \]
 for every $H$-module $X$. If we fix $X = H$ then, since 
 \[
  x = \sum_{a=1}^n \phi^a(x) v_a = \sum_{a=1}^n \phi^a(x \cdot 1) v_a = \sum_{a=1}^n \phi^a(1) x \cdot v_a = (\id_H \otimes \eta^*)(j_H(x))
 \]
 for every $x \in H$, we have 
 \[
  f(x) = (\id_H \otimes \eta^*)(j_H(f(x)))
  = (\id_H \otimes \eta^*)(d_H(x))
  = e_U(x) \qedhere
 \]
\end{proof}

Just like the coend $\calL$, the end $\calE$ admits the structure of a braided Hopf algebra in $\calC$. When $\calC = \mods{H}$ is the category of finite-dimensional representations of a finite-dimensional ribbon Hopf algebra $H$, this can be expressed in terms of the structure morphisms 
\begin{align*}
 \mu &: H \otimes H \to H, &
 \eta &: \Bbbk \to H, \\*
 \Delta &: H \to H \otimes H, &
 \epsilon &: H \to \Bbbk, \\*
 S &: H \to H, &
 &
\end{align*}
and of the R-matrix $R = R' \otimes R'' \in H \otimes H$, which determines the Drinfeld element $u = S(R'')R' \in H$. This was first explicitly spelled out by Majid \cite{M91b}, who referred to the resulting braided Hopf algebra as the \textit{transmutation} of $H$, denoted $\bar{H}$. Thanks to Prop.~\ref{P:end_H-mod}, as an object $\bar{H}$ is given by the adjoint representation. The braided Hopf algebra structure on $\bar{H}$ is given, for all $x,y \in \bar{H}$, by
\begin{align*}
 \bar{\mu}(x \otimes y) &= \mu(x \otimes y) = xy, &
 \bar{\eta}(1) &= 1, \\
 \bar{\Delta}(x) &= \ad_{R''}(x_{(2)}) \otimes R'x_{(1)} &
 \bar{\varepsilon}(x) &= \varepsilon(x), \\*
 &= x_{(1)}S(R'') \otimes \ad_{R'}(x_{(2)}), & & \\
 \bar{S}(x) &= R''u^{-1}S(x)S(R') &
 & \\*
 &= u^{-1}S(R'')S(x)R', &
 &
\end{align*}
compare with \cite[Sec.~3.4]{L94}.


\begin{thebibliography}{99}
 
 \bibitem{A92}
 H.~Andersen, 
 \textit{Tensor products of quantized tilting modules},
 \doi{10.1007/BF02096627}{Comm. Math. Phys. \textbf{149} (1992), No. 1, 149--159}.
 
 \bibitem{A02}
 J.~Andersen,
 \textit{Asymptotic Faithfulness of the Quantum $\mathrm{SU}(n)$ Representations of the Mapping Class Groups},
 \doi{10.4007/annals.2006.163.347}{Ann. of Math. (2) \textbf{163} (2006), No. 1, 347--368};
 \arxiv{math/0204084}{[math.QA]}.
 
 \bibitem{A88}
 M.~Atiyah, 
 \textit{Topological Quantum Field Theories}, 
 \href{http://www.numdam.org/item?id=PMIHES_1988__68__175_0}{Publ. Math. Inst. Hautes Études Sci. \textbf{68} (1988),	175--186}.
 
 \bibitem{BB00}
 A.~Beliakova, C.~Blanchet,
 \textit{Modular Categories of Types B, C and D}
 \doi{10.1007/PL00000385}{Comment. Math. Helv. \textbf{76} (2001), No. 3, 467--500};
 \arxiv{math/0006227}{[math.QA]}.

 \bibitem{BBG18}
 A.~Beliakova, C.~Blanchet, A.~Gainutdinov,
 \textit{Modified Trace is a Symmetrised Integral},
 \arxiv{1801.00321}{[math.QA], 2018}.
 
 \bibitem{BBG17}
 A.~Beliakova, C.~Blanchet, N.~Geer,
 \textit{Logarithmic Hennings Invariants for Restricted Quantum $\mathfrak{sl}(2)$},
 \doi{10.2140/agt.2018.18.4329}{Algebr. Geom. Topol. \textbf{18} (2018), No. 7, 4329--4358};
 \arxiv{1705.03083}{[math.GT]}.

 \bibitem{BCGP14}
 C.~Blanchet, F.~Costantino, N.~Geer, B.~Patureau-Mirand,  
 \textit{Non-Semisimple TQFTs, Reidemeister Torsion and Kashaev's Invariants}, 
 \doi{10.1016/j.aim.2016.06.003}{Adv. Math. \textbf{301} (2016), 1--78}; \arxiv{1404.7289}{[math.GT]}.
 
 \bibitem{BHMV95}
 C.~Blanchet, N.~Habegger, G.~Masbaum, P.~Vogel,
 \textit{Topological Quantum Field Theories Derived from the Kauffman Bracket},
 \doi{10.1016/0040-9383(94)00051-4}{Topology \textbf{34} (1995), No. 4, 883--927}.
 
 \bibitem{BK01} 
 B.~Bakalov, A.~Kirillov,
 \textit{Lectures on Tensor Categories and Modular Functors},
 \href{https://bookstore.ams.org/ulect-21/}{University Lecture Series \textbf{21}, American Mathematical Society, Providence, RI, 2001}.
 
 \bibitem{B98}
 C.~Blanchet,
 \textit{Hecke Algebras, Modular Categories and 3-Manifolds Quantum Invariants},
 \doi{10.1016/S0040-9383(98)00066-4}{Topology \textbf{39} (2000), No. 1, 193--223}:
 \arxiv{math/9803114}{[math.GT]}.
 
 \bibitem{B00}
 A.~Bruguières,
 \textit{Catégories prémodulaires, modularisations et invariants des variétés de dimension 3},
 \doi{10.1007/s002080050011}{Math. Ann. \textbf{316} (2000), No. 2, 215--236}.
 
 \bibitem{CGP12}
 F.~Costantino, N.~Geer, B.~Patureau-Mirand,  
 \textit{Quantum Invariants of 3-Manifolds via Link Surgery Presentations and Non-Semi-Simple Categories}, 
 \doi{10.1112/jtopol/jtu006}{J. Topol. \textbf{7} (2014), No. 4, 1005--1053};
 \arxiv{1202.3553}{[math.GT]}. 
 
 \bibitem{CGP14}
 F.~Costantino, N.~Geer, B.~Patureau-Mirand,  
 \textit{Some Remarks on the Unrolled Quantum Group of $\fsl(2)$},
 \doi{10.1016/j.jpaa.2014.10.012}{J. Pure Appl. Algebra \textbf{219} (2015), No. 8, 3238--3262};
 \arxiv{arXiv:1406.0410}{[math.QA]}. 
 
 \bibitem{CKS07}
 Q.~Chen, S.~Kuppum, P.~Srinivasan,
 \textit{On the Relation Between the WRT Invariant and the Hennings Invariant},
 \doi{10.1017/S030500410800193X}{Math. Proc. Cambridge Philos. Soc. \textbf{146} (2009), No. 1, 151--163};
 \arxiv{0709.2318}{[math.QA]}.
 
 \bibitem{DGGPR19}
 M.~De~Renzi, A.~Gainutdinov, N.~Geer, B.~Patureau-Mirand, I.~Runkel,
 \textit{3-Di\-men\-sion\-al TQFTs from Non-Semisimple Modular Categories}, 
 \arxiv{1912.02063}{[math.GT], 2019}.
 
 \bibitem{DGGPR20}
 M.~De~Renzi, A.~Gainutdinov, N.~Geer, B.~Patureau-Mirand, I.~Runkel,
 \textit{Mapping Class Group Representations from Non-Semisimple TQFTs}, 
 \arxiv{2010.14852}{[math.GT], 2020}.
  
 \bibitem{DGP17}
 M.~De Renzi, N.~Geer, B.~Patureau-Mirand,
 \textit{Renormalized Hennings Invariants and 2+1-TQFTs},
 \doi{10.1007/s00220-018-3187-8}{Comm. Math. Phys. \textbf{362} (2018), No. 3, 855-907};
 \arxiv{1707.08044}{[math.GT]}.
 
 \bibitem{DGP18}
 M.~De Renzi, N.~Geer, B.~Patureau-Mirand,
 \textit{Non-Semisimple Quantum Invariants and TQFTs From Small and Unrolled Quantum Groups}, 
 \arxiv{1812.10685}{[math.GT], 2018}.
 
 \bibitem{D17}
 M.~De~Renzi, 
 ​\textit{Non-Semisimple Extended Topological Quantum Field The\-o\-ries},
 \arxiv{1703.07573}{[math.GT], 2017}.
 
 \bibitem{D90}
 V.~Drinfel'd,
 \textit{Almost Cocommutative Hopf Algebras},
 (Russian) \href{http://www.mathnet.ru/php/archive.phtml?wshow=paper&jrnid=aa&paperid=10&option_lang=eng}{Algebra i Analiz \textbf{1} (1989), No. 2, 30--46};
 translation in Leningrad Math. J. \textbf{1} (1990), No. 2, 321--342.
 
 \bibitem{EGNO15}
 P.~Etingof, S.~Gelaki, D.~Nikshych, V.~Ostrik,
 \textit{Tensor Categories}, 
 \doi{10.1090/surv/205}{Math. Surveys Monogr. \textbf{205}, Amer. Math. Soc., Providence, RI, 2015}.
 
 \bibitem{FGR17}
 V.~Farsad, A.~M.~Gainutdinov, I.~Runkel,
 \textit{$\SL(2,\Z)$-Action for Ribbon Quasi-Hopf Algebras},
 \doi{10.1016/j.jalgebra.2018.12.012}{J. Algebra \textbf{522} (2019), 243--308};
 \arxiv{1702.01086}{[math.QA]}.
 
 \bibitem{FGST05} 
 B.~Feigin, A.~Gainutdinov, A.~Semikhatov, I.~Tipunin, 
 \textit{Modular Group Representations and Fusion in Logarithmic Conformal Field Theories and in the Quantum Group Center},
 \doi{10.1007/s00220-006-1551-6}{Comm. Math. Phys. \textbf{265} (2006), No. 1, 47--93};
 \href{https://arxiv.org/abs/hep-th/0504093}{\texttt{arXiv:hep-th/0504093}}.

 \bibitem{GKP10}
 N.~Geer, J.~Kujawa, B.~Patureau-Mirand,
 \textit{Generalized Trace and Modified Dimension Functions on Ribbon Categories},
 \doi{10.1007/s00029-010-0046-7}{Selecta Math. \textbf{17} (2011), No. 2, 435--504};
 \arxiv{1001.0985}{[math.RT]}.
 
 \bibitem{GKP18} 
 N.~Geer, J.~Kujawa, B.~Patureau-Mirand,
 \textit{M-Traces in (Non-Unimodular) Pivotal Categories},
 \arxiv{1809.00499}{[math.RT], 2018}.
 
 \bibitem{GPT07}
 N.~Geer, B.~Patureau-Mirand, V.~Turaev,
 \textit{Modified Quantum Dimensions and Re-Nor\-mal\-ized Link Invariants},
 \doi{10.1112/S0010437X08003795}{Compos. Math. \textbf{145} (2009), No. 1, 196--212};
 \arxiv{0711.4229}{[math.QA]}.

 \bibitem{GPV11}
 N.~Geer, B.~Patureau-Mirand, A.~Virelizier,
 \textit{Traces on Ideals in Pivotal Categories},
 \doi{10.4171/QT/36}{Quantum Topol. \textbf{4} (2013), No. 1, 91--124};
 \arxiv{1103.1660}{[math.QA]}.

 \bibitem{H05}
 K. Habiro,
 \textit{Bottom Tangles and Universal Invariants},
 \doi{10.2140/agt.2006.6.1113}{Algebr. Geom. Topol. \textbf{6} (2006), No. 3, 1113--1214};
 \href{https://arXiv.org/abs/math/0505219}{\texttt{arXiv:math/0505219}}.
 
 \bibitem{H96}
 M.~Hennings,
 \textit{Invariants of Links and 3-Manifolds Obtained from Hopf Algebras},
 \doi{10.1112/jlms/54.3.594}{J. London Math. Soc. (2) \textbf{54} (1996), No. 3, 594--624}.
 
 \bibitem{J85}
 V.~Jones,
 \textit{A Polynomial Invariant for Knots via von Neumann Algebras},
 \doi{10.1090/S0273-0979-1985-15304-2}{Bull. Amer. Math. Soc. (N.S.) \textbf{12} (1985), No. 1, 103--111}.

 \bibitem{K95}
 C.~Kassel,
 \textit{Quantum Groups},
 \doi{10.1007/978-1-4612-0783-2}{Graduate Texts in Mathematics \textbf{155}, Springer-Verlag, New York, 1995}.
 
 \bibitem{K94}
 T.~Kerler,
 \textit{Mapping Class Group Actions on Quantum Doubles},
 \doi{10.1007/BF02101554}{Comm. Math. Phys. \textbf{168} (1995), No. 2, 353--388};
 \href{http://arXiv.org/abs/hep-th/9402017}{\texttt{arXiv:hep-th/9402017}}
 
 \bibitem{K96a}
 T.~Kerler,
 \textit{Genealogy of Nonperturbative Quantum-Invariants of 3-Manifolds: The Surgical Family}, in 
 \href{https://www.crcpress.com/Geometry-and-Physics/Pedersen-Andersen-Dupont-Swann/p/book/9780824797911}{Geometry and Physics (Aarhus, 1955), 503--547, Lecture Notes in Pure and Appl. Math. \textbf{184}, Dekker, New York, 1997};
 \href{http://arXiv.org/abs/q-alg/9601021}{\texttt{arXiv:q-alg/9601021}}.
 
 \bibitem{K96b}
 T.~Kerler,
 \textit{On the Connectivity of Cobordisms and Half-Projective TQFT's},
 \doi{10.1007/s002200050487}{Comm. Math. Phys. \textbf{198} (1998), No. 3, 535--590};
 \href{http://arXiv.org/abs/q-alg/9603017}{\texttt{arXiv:q-alg/9603017}}.
 
 \bibitem{KL01}
 T.~Kerler, V.~Lyubashenko,
 \textit{Non-Semisimple Topological Quantum Field Theories for 3-Manifolds with Corners},
 \doi{10.1007/b82618}{Lecture Notes in Mathematics \textbf{1765}. Springer-Verlag, Berlin, 2001}.
 
 \bibitem{KM91}
 R.~Kirby, P.~Melvin,
 \textit{The 3-Manifold Invariants of Witten and Reshetikhin--Turaev for $\fsl(2,\C)$},
 \doi{10.1007/BF01232277}{Invent. Math. \textbf{105} (1991), No. 3, 473--545}.
 
 \bibitem{KR94}
 L.~Kauffman, D.~Radford
 \textit{Invariants of 3-Manifolds Derived From Finite Dimensional Hopf Algebras},
 \doi{10.1142/S0218216595000077}{J. Knot Theory Ramifications \textbf{4} (1995), No. 1, 131--162};
 \href{http://arXiv.org/abs/hep-th/9406065}{\texttt{arXiv:hep-th/9406065}}
 
 \bibitem{La90}
 R.~Lawrence,
 \textit{A Universal Link Invariant},
 The Interface of Mathematics and Particle Physics (Oxford, 1988), 151--156, Inst. Math. Appl. Conf. Ser. New Ser. \textbf{24}, Oxford Univ. Press, New York, 1990.
 
 \bibitem{L93}
 R. Lickorish,
 \textit{The Skein Method for Three-Manifold Invariants},
 \doi{10.1142/S0218216593000118}{J. Knot Theory Ramifications \textbf{2} (1993), No. 2, 171--194}.
 
 \bibitem{Lu90}
 G.~Lusztig,
 \textit{Finite Dimensional Hopf Algebras Arising from Quantized Universal Enveloping Algebras},
 \doi{10.1090/S0894-0347-1990-1013053-9}{J. Amer. Math. Soc. \textbf{3} (1990), No. 1, 257--296}.
 
 \bibitem{L94} 
 V.~Lyubashenko,
 \textit{Invariants of 3-Manifolds and Projective Representations of Mapping Class Groups via Quantum Groups at Roots of Unity},
 \doi{10.1007/BF02101805}{Comm. Math. Phys. \textbf{172} (1995), No. 3, 467--516};
 \href{http://arXiv.org/abs/hep-th/9405167}{\texttt{arXiv:hep-th/9405167}}.
 
 \bibitem{L95} 
 V.~Lyubashenko,
 \textit{Modular Transformations for Tensor Categories},
 \doi{10.1016/0022-4049(94)00045-K}{J. Pure Appl. Algebra \textbf{98} (1995), No. 3, 279--327}.
 
 \bibitem{L99}
 V.~Lyubashenko,
 \textit{Squared Hopf Algebras},
 \doi{10.1090/memo/0677}{Mem. Amer. Math. Soc. \textbf{142} (1999), No. 677}.
 
 \bibitem{M91a}
 S.~Majid,
 \textit{Reconstruction Theorems and Rational Conformal Field Theories},
 \doi{10.1142/S0217751X91002100}{Internat. J. Modern Phys. A \textbf{6} (1991), No. 24, 4359--4374}.
 
 \bibitem{M91b}
 S. Majid,
 \textit{Braided Groups and Algebraic Quantum Field Theories},
 \doi{10.1007/BF00403542}{Lett. Math. Phys. \textbf{22} (1991), No. 3, 167--175}
 
 \bibitem{M95}
 S.~Majid, 
 \textit{Foundations of Quantum Group Theory},
 \doi{10.1017/CBO9780511613104}{Cambridge University Press, 1995}.
 
 \bibitem{M71}
 S.~Mac Lane, 
 \textit{Categories for the Working Mathematician},
 \doi{10.1007/978-1-4612-9839-7}{Graduate Texts in Mathematics \textbf{5}. Springer-Verlag, New York-Berlin, 1971}.
 
 \bibitem{M13} 
 J.~Murakami,  
 \textit{Generalized Kashaev Invariants for Knots in Three Manifolds},  
 \doi{10.4171/QT/86}{Quantum Topol. \textbf{8} (2017), No. 1, 35--73};
 \arxiv{1312.0330}{[math.GT]}.
 
 \bibitem{Oh95}
 T.~Ohtsuki,
 \textit{Invariants of 3-Manifolds Derived from Universal Invariants of Framed Links},
 \doi{10.1017/S0305004100073102}{Math. Proc. Cambridge Philos. Soc. \textbf{117} (1995), No. 2, 259--273}.
 
 \bibitem{Oh02}
 T.~Ohtsuki,
 \textit{Quantum Invariants, a Study of Knots, 3-Manifolds, and Their Sets},
 \doi{10.1142/4746}{Series on Knots and Everything \textbf{29}, World Scientific Publishing Co., Inc., River Edge, NJ, 2002}.
 
 \bibitem{Os95}
 V.~Ostrik,
 \textit{Decomposition of the Adjoint Representation of the Small Quantum $\fsl_2$},
 \doi{10.1007/s002200050109}{Comm. Math. Phys. \textbf{186} (1997), No. 2, 253--264}; 
 \href{https://arxiv.org/abs/q-alg/9512026}{\texttt{arXiv:q-alg/9512026}}.
 
 \bibitem{R93}
 M.~Rosso,
 \textit{Quantum Groups at a Root of 1 and Tangle Invariants},
 \doi{10.1142/S0217979293003462}{Internat. J. Modern Phys. B \textbf{7} (1993), No. 20-21, 3715--3726.}
 
 \bibitem{RT90}
 N.~Reshetikhin, V.~Turaev,
 \textit{Ribbon Graphs and Their Invariants Derived From Quantum Groups},
 \doi{10.1007/BF02096491}{Comm. Math. Phys. \textbf{127} (1990), No. 1, 1--26}.
 
 \bibitem{RT91}
 N.~Reshetikhin, V.~Turaev,
 \textit{Invariants of 3-Manifolds via Link Polynomials and Quantum Groups},
 \doi{10.1007/BF01239527}{Invent. Math. \textbf{103} (1991), No. 1, 547--597}.
 
 \bibitem{S88}
 G.~Segal,
 \textit{The Definition of Conformal Field Theory},
 \doi{10.1007/978-94-015-7809-7}{Differential Geometrical Methods in Theoretical Physics (Como, 1987), 165--171, NATO Adv. Sci. Inst. Ser. C Math. Phys. Sci. \textbf{250}, Kluwer Acad. Publ., Dordrecht, 1988}.
 
 \bibitem{S16}
 K.~Shimizu,
 \textit{Non-Degeneracy Conditions for Braided Finite Tensor Categories},
 \doi{10.1016/j.aim.2019.106778}{Adv. Math. \textbf{355} (2019), 106778};
 \arxiv{1602.06534}{[math.QA]}.

 \bibitem{S92}
 R.~Suter,
 \textit{Modules Over $\mathfrak{U}_q(\fsl_2)$},
 \doi{10.1007/BF02102012}{Comm. Math. Phys. \textbf{163} (1994), No. 2, 359--393}.
 
 \bibitem{T94} 
 V.~Turaev,
 \textit{Quantum Invariants of Knots and 3-Manifolds},
 \href{https://www.degruyter.com/view/product/461906}{De Gruyter Studies in Mathematics \textbf{18}, Walter de Gruyter \& Co., Berlin, 1994}.
 
 \bibitem{TV17}
 V.~Turaev, A.~Virelizier,
 \textit{Monoidal Categories and Topological Field Theory},
 \doi{10.1007/978-3-319-49834-8}{Progress in Mathematics \textbf{322}, Birkhäuser/Springer, Cham, 2017}.
 
 \bibitem{V03}
 A. Virelizier,
 \textit{Kirby Elements and Quantum Invariants},
 \doi{10.1112/S0024611506015905}{Proc. London Math. Soc. (3) \textbf{93} (2006), No. 2, 474--514};
 \arxiv{math/0312337}{[math.GT]}.
 
 \bibitem{W88} 
 E.~Witten,
 \textit{Topological Quantum Field Theory},
 \doi{10.1007/BF01223371}{Comm. Math. Phys. \textbf{117} (1988), No. 3, 353--386}.
 
 \bibitem{W89}
 E.~Witten,
 \textit{Quantum Field Theory and the Jones Polynomial},
 \doi{10.1007/BF01217730}{Comm. Math. Phys. \textbf{121} (1989), No. 3, 351--399}.

\end{thebibliography}
\end{document}